\numberwithin{equation}{section}
\newtheorem{prop}{Proposition}[section]
\newtheorem{theo}[prop]{Theorem}
\newtheorem{lemm}[prop]{Lemma}
\newtheorem{coro}[prop]{Corollary}
\newtheorem{rema}[prop]{Remark}
\newtheorem{ques}[prop]{Question}
\newtheorem*{thea}{Theorem A}
\newtheorem*{theob}{Theorem B}
\newtheorem*{theoc}{Theorem C}
\def\and{\quad{\rm and}\quad}
\def\<{\langle}
\def\>{\rangle}
\title[Existence and nonexistence to exterior Dirichlet problem]{Existence and nonexistence to exterior Dirichlet problem For Monge-Amp\`ere equation}
\author{ Yanyan Li
        \and
        Siyuan Lu 
        }       
\address{Department of Mathematics, Rutgers University, 110 Frelinghuysen Road, Piscataway, NJ 08854}
\email{yyli@math.rutgers.edu}
\email{siyuan.lu@math.rutgers.edu}
\thanks{Research of the first named author is partially supported by NSF grant DMS-1501004. Research of the second named author is partially supported by CSC fellowship.}
\begin{document}

\begin{abstract}
We consider the exterior Dirichlet problem for Monge-Amp\`ere equation with prescribed asymptotic behavior. Based on earlier work by Caffarelli and the first named author, we complete the characterization of the existence and nonexistence of solutions in terms of their asymptotic behaviors.
\end{abstract}

\subjclass{53C20,  53C21, 58J05, 35J60}

\maketitle

\section{Introduction}

A classic theorem of J\"orgens \cite{J}, Calabi \cite{Calabi} and Pogorelov \cite{P} states that any classical convex solution of
\begin{align*}
\det(D^2u)=1\quad in \quad \mathbb{R}^n
\end{align*}
must be a quadratic polynomial.

A simpler and more analytical proof, along the lines of affine geometry, was later given by Cheng and Yau \cite{CY}. The theorem was extended by Caffarelli \cite{C1} to viscosity solutions. Another proof of the theorem was given by Jost and Xin \cite{JX}. Trudinger and Wang \cite{TW} proved that if $\Omega$ is an open convex subset of $\mathbb{R}^n$ and $u$ is a convex $C^2$ solution of $\det(D^2u)=1$ in $\Omega$ with $\lim_{x\rightarrow \partial \Omega}u(x)=\infty$, then $\Omega=\mathbb{R}^n$.

In 2003, Caffarelli and the first named author \cite{CL} extended the J\"orgens-Calabi-Pogorelov theorem to exterior domain. In a subsequent paper \cite{CL2}, they also gave another extension: a classical convex solution $u$ of $\det(D^2u)=f$ in $\mathbb{R}^n$ with a periodic positive $f$ must be sum of a quadratic polynomial and a periodic function. Moreover, their approach in \cite{CL} is enough to establish the existence of solutions to the exterior Dirichlet problem for Monge-Amp\`ere equations. More specifically, let
\begin{align*}
\mathcal{A}=\{ A| A \text{ is a real }n \times n \text{ symmetric positive definite matrix with} \det A=1\}.
\end{align*}

\begin{thea}[\cite{CL}]\label{thm cl}
Let $D$ be a smooth, bounded, strictly convex domain in $\mathbb{R}^n$ for $n\geq 3$ and $\varphi\in C^2(\partial D)$. Then for any $A\in \mathcal{A}$, $b\in \mathbb{R}^n$, there exists a constant $c_1$ depending only on $n,D,\varphi,b$ and $A$, such that for every $c>c_1$, there exists a unique solution $u\in C^\infty(\mathbb{R}^n\setminus \bar{D})\cap C^0(\overline{\mathbb{R}^n\setminus D})$ satisfying
\begin{align*}
\begin{cases}
\det(D^2u)=1 \quad in \quad \mathbb{R}^n\setminus \bar{D},\\
u=\varphi\quad on \quad \partial D,\\
\lim_{|x|\rightarrow\infty}|u(x)-\left(\frac{1}{2}x^\prime Ax+bx+c\right)|=0.
\end{cases}
\end{align*}
\end{thea}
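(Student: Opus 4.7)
The plan is to apply Perron's method after an affine normalization, with radial solutions of the Monge-Amp\`ere equation serving as the main barriers. Since $A\in\mathcal{A}$ satisfies $\det A=1$, the change of variables $y=A^{1/2}x$ is volume preserving and keeps $\det(D^2u)=1$ invariant, while a further translation in $y$ absorbs the linear term $b\cdot x$. One is thus reduced to solving
\[
\det(D^2u)=1 \ \text{in}\ \mathbb{R}^n\setminus\overline{D'},\qquad u|_{\partial D'}=\tilde\varphi,\qquad u(y)-\bigl(\tfrac12|y|^2+c'\bigr)\to 0,
\]
where $D'$ is still smooth, bounded, strictly convex, and $c'=c'(A,b,c)$.

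The principal tool is the one-parameter family of radial solutions of $\det(D^2 W)=1$. Writing $w(x)=W(|x|)$, the eigenvalues of $D^2w$ are $W''$ and $W'/r$ (the latter with multiplicity $n-1$), so the equation reduces to $W''(W'/r)^{n-1}=1$, which integrates once to $W'(r)=(r^n+a)^{1/n}$ for a free constant $a$. For $n\geq 3$ the binomial expansion and integration give
\[
W_a(r)=\tfrac12 r^2+\mu(a)+O(r^{2-n})\qquad\text{as }r\to\infty,
\]
with $\mu:\mathbb{R}\to\mathbb{R}$ surjective. This is exactly where the hypothesis $n\geq 3$ enters: for $n=2$ an unavoidable $\tfrac{a}{2}\log r$ correction appears, which is incompatible with the prescribed asymptotic form.

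Next I run Perron's method. Let $\mathcal F$ be the family of continuous convex viscosity subsolutions $v$ of $\det(D^2v)\geq 1$ in $\mathbb{R}^n\setminus\overline{D'}$ with $v\leq\tilde\varphi$ on $\partial D'$ and $\limsup_{|y|\to\infty}\bigl(v(y)-\tfrac12|y|^2-c'\bigr)\leq 0$, and set $u=\sup_{v\in\mathcal F}v$. To make $\mathcal F$ rich enough, one needs $c>c_1$ with $c_1$ sufficiently large: explicit members are produced by taking the maximum of $\tfrac12|y|^2+c'-K$ with tangent-plane affine functions selected from the $C^2$ data of $\tilde\varphi$ together with the strict convexity of $\partial D'$. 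Standard arguments for convex viscosity solutions of Monge-Amp\`ere then show $u$ is itself a solution of $\det(D^2u)=1$. Boundary continuity $u=\tilde\varphi$ on $\partial D'$ follows by sandwiching against local barriers: supporting-hyperplane pieces from below, and suitably translated and lifted $W_a$'s dominating $\tilde\varphi$ from above. The upper asymptotic $u(y)\leq\tfrac12|y|^2+c'+o(1)$ is immediate from the defining constraint, and the matching lower bound is produced by comparing $u$ from below with $W_a(|y-y_0|)$ for $y_0$ outside $D'$ and $\mu(a)\uparrow c'$. Interior $C^\infty$ regularity then follows from Caffarelli's theory for Monge-Amp\`ere, and uniqueness from the comparison principle on exterior domains with prescribed asymptotics.

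The step I expect to be the main obstacle is the two-sided matching of the additive asymptotic constant. The quadratic leading term at infinity is essentially free from the Perron construction, but pinning the constant to be exactly $c'$ requires quantitative control of the radial family $\{W_a\}$ and is precisely where the lower-order rate $O(r^{2-n})$ and the monotonicity and range of $\mu(a)$ are used, both of which degenerate at $n=2$; the threshold $c_1$ arises from the requirement that the sub/supersolution sandwich can actually be closed at the boundary $\partial D'$ simultaneously with the correct value at infinity.
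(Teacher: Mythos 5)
Your outline reproduces the strategy of Caffarelli--Li \cite{CL} (note the paper under review does not prove Theorem A, it quotes it from \cite{CL}): affine normalization using $\det A=1$, the radial family with $W_a'(r)=(r^n+a)^{1/n}$ and expansion $\tfrac12 r^2+\mu(a)+O(r^{2-n})$ for $n\ge 3$, Perron's method, and the exterior comparison principle for uniqueness. So the skeleton is right; the problem is in the step that carries the actual content.

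The genuine gap is your construction of ``explicit members'' of the Perron family $\mathcal F$. The function $\max\{\tfrac12|y|^2+c'-K,\ w_\xi\}$, with $w_\xi$ an affine tangent function, is not a viscosity subsolution of $\det D^2v\ge 1$: on the open set where an affine piece is strictly the largest one has $\det D^2=0<1$, and for large $K$ that set is precisely a neighborhood of $\partial D'$ --- the only place these members are needed, namely to force $u=\sup\mathcal F$ to attain $\tilde\varphi$ at every boundary point. The repair is not cosmetic. One must use genuine subsolution pieces, e.g.\ $w_\xi(y)+\tfrac12|y-\xi|^2$ (determinant $1$, and $\le\tilde\varphi$ on $\partial D'$ by strict convexity and the $C^2$ bound on $\tilde\varphi$), but these carry a linear term at infinity that violates the asymptotic constraint; they must therefore be capped by gluing with the radial barriers $W_a$, normalized to have asymptotic constant exactly $c'$, across a large sphere on which $W_a$ dominates them while still lying below $\tilde\varphi$ on $\partial D'$. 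The simultaneous satisfiability of these two requirements is exactly what produces the threshold $c_1$; as written, your argument yields neither boundary attainment nor a mechanism for $c_1$. Indeed, radial members with asymptotic constant exactly $c'$ and lying below $\tilde\varphi$ on $\partial D'$ exist for \emph{every} $c'$ (take $a$ large, since $\int_\rho^\infty((s^n+a)^{1/n}-s)\,ds\to\infty$), so nonemptiness of $\mathcal F$ and the lower asymptotic bound are never the issue; the largeness of $c$ is needed only to push $u$ up to $\tilde\varphi$ on $\partial D'$, which your proposed members cannot do.

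Three smaller points. The upper asymptotics of $u$ is not ``immediate from the defining constraint'': the $o(1)$ in $v\le\tfrac12|y|^2+c'+o(1)$ is not uniform over $\mathcal F$, and one must compare each $v\in\mathcal F$ with an explicit upper barrier, e.g.\ $\tfrac12|y|^2+c'+C|y|^{2-n}$ (a supersolution for large $|y|$) or the radial solutions themselves. A radial comparison function centered at $y_0\neq 0$ has expansion $\tfrac12|y|^2-y_0\cdot y+\dots$, so ``$W_a(|y-y_0|)$ for $y_0$ outside $D'$'' cannot be used to pin the constant at infinity; after normalization the centers should be at the origin inside $D'$. Finally, $C^\infty$ regularity of the Perron solution is not a direct citation of Caffarelli's interior theory for viscosity solutions on an exterior domain: one first needs strict convexity of the constructed solution (ruling out Pogorelov-type degeneracy), which is part of the work in \cite{CL}. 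Also, as stated the surjectivity of $\mu$ is vacuous (it is just the free additive constant); the substantive fact is the joint dependence of the boundary values and the asymptotic constant on $a$, which is where the quantitative structure of the radial family actually enters.
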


For $n=2$, the exterior Dirichlet problem was studied earlier by Ferrer, Mart\'inez and Mil\'an \cite{FMM1,FMM2} using complex variable methods.

\medskip

It is desirable to completely understand the relation between the constant $c$ and the exsitence and nonexistence of solutions to the exterior Dirichlet problem. More precisely,

\begin{ques}\label{q}
Concerning Theorem A, is there a sharp constant $C_*$ such that we have the existence of solutions for $c\geq C_*$ while have the nonexistence for $c<C_*$?
\end{ques}

\medskip

In \cite{WB}, Wang and Bao answered the question among radially symmetric solutions. A special case of their theorem is as follows:
\begin{theob}[\cite{WB}]\label{them-wb}
For $n\geq 3$, let $C_*=-\frac{1}{2}+\int_1^\infty s((1-s^{-n})^{\frac{1}{n}}-1)ds$. Then for every $c\geq C_*$, there exists a unique radially symmetric solution $u\in C^\infty(\mathbb{R}^n\setminus \overline{B_1(0)})\cap C^1(\overline{\mathbb{R}^n\setminus B_1(0)})$ satisfying
\begin{align*}
\begin{cases}
\det(D^2u)=1 \quad in \quad \mathbb{R}^n\setminus \overline{B_1(0)}, \\
u=0\quad on \quad \partial B_1(0),\\
\lim_{|x|\rightarrow\infty}|u(x)-\left(\frac{1}{2}|x|^2+c\right)|=0.
\end{cases}
\end{align*}
While for $c<C_*$, there is no radially symmetric classical solution for the above problem. 
\end{theob}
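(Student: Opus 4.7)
The plan is to exploit full radial symmetry to reduce the Monge--Amp\`ere equation to an explicitly integrable ODE, parametrize the one-dimensional family of solutions by a single constant $C$, and then show that the map from $C$ to the asymptotic constant $c$ is a continuous strictly increasing bijection from $[-1,\infty)$ onto $[C_*,\infty)$.

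For a radial $C^2$ function $u(x)=u(r)$ with $r=|x|$, the Hessian has eigenvalues $u''(r)$ (simple) and $u'(r)/r$ (multiplicity $n-1$), so the equation becomes
\[
u''(r)\bigl(u'(r)/r\bigr)^{n-1}=1,\qquad r>1.
\]
Setting $v=u'$, this is $v^{n-1}v'=r^{n-1}$, which integrates to $v(r)^n=r^n+C$ for some constant $C$. Convexity of $u$ forces $v\ge 0$ on $[1,\infty)$, so necessarily $C\ge -1$; no smaller value of $C$ can arise from a classical radial solution. The Dirichlet condition $u(1)=0$ then determines $u$ uniquely by $u(r)=\int_1^r(s^n+C)^{1/n}\,ds$, which is $C^\infty$ on $\{r>1\}$ and $C^1$ up to $\partial B_1$ (merely $C^1$ when $C=-1$, since $v(1)=0$ in that case).

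Next I will convert the asymptotic behavior into a single scalar condition on $C$. Writing $\tfrac12 r^2=\tfrac12+\int_1^r s\,ds$ gives
\[
u(r)-\tfrac12 r^2=-\tfrac12+\int_1^r\bigl[(s^n+C)^{1/n}-s\bigr]\,ds.
\]
A Taylor expansion shows the integrand equals $(C/n)s^{1-n}+O(s^{1-2n})$ for large $s$, which is integrable at infinity precisely because $n\ge 3$. Hence $\lim_{r\to\infty}[u(r)-\tfrac12 r^2]$ exists and equals
\[
c(C):=-\tfrac12+\int_1^\infty\bigl[(s^n+C)^{1/n}-s\bigr]\,ds,
\]
with $c(-1)=C_*$ by direct substitution.

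The theorem then reduces to showing $C\mapsto c(C)$ is a continuous strictly increasing bijection $[-1,\infty)\to[C_*,\infty)$. Strict monotonicity is immediate because $(s^n+C)^{1/n}-s$ is strictly increasing in $C$ pointwise. Continuity follows from dominated convergence once one exhibits, on each bounded interval $C\in[-1,M]$, a uniform integrable majorant for the integrand, which is straightforward using $(s^n+C)^{1/n}-s=O(s^{1-n})$ uniformly for $C\in[-1,M]$. Finally, $c(C)\to\infty$ as $C\to\infty$ since $(s^n+C)^{1/n}\ge C^{1/n}$ on $[1,2]$. Inverting this bijection produces a unique $C\in[-1,\infty)$, hence a unique radial solution, for each $c\ge C_*$, while $c<C_*$ is outside the range and so admits no such solution. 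The calculation is essentially mechanical; the only minor point of care is the endpoint $C=-1$, where the reduced problem still yields a valid solution with the regularity asserted in the statement.
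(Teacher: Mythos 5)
Your argument is correct: the radial reduction to $u'(r)^n=r^n+C$ with $C\ge -1$, the identification $c(C)=-\tfrac12+\int_1^\infty[(s^n+C)^{1/n}-s]\,ds$ (convergent since $n\ge 3$), and the strictly increasing continuous bijection $[-1,\infty)\to[C_*,\infty)$ give exactly the stated existence, uniqueness, and nonexistence, including the borderline regularity at $C=-1$. Note that the paper itself gives no proof of Theorem B --- it is quoted from Wang--Bao \cite{WB} --- and your ODE analysis is essentially the argument of that reference specialized to the Monge--Amp\`ere case, so there is nothing to reconcile beyond the tacit convention (shared by the paper) that ``solution'' means convex solution, which is what licenses $u'\ge 0$.
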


\medskip

In this paper, we give an affirmative answer to Question \ref{q}. To begin with, let us recall the definition of viscosity solutions.

Let $\Omega$ be a domain in $\mathbb{R}^n$, $g\in C^0(\Omega) $ be a positive function, and $u\in C^0(\Omega)$ be a locally convex function. We say that $u$ is a viscosity subsolution of
\begin{align}\label{vis}
\det(D^2u)=g\quad in \quad \Omega
\end{align}
if for any $\bar{x}\in \Omega$ and every convex function $\varphi\in C^2(\Omega)$ satisfying
\begin{align*}
\varphi\geq u\quad in\quad \Omega\quad and \quad\varphi(\bar{x})=u(\bar{x}),
\end{align*}
we have
\begin{align*}
\det(D^2\varphi(\bar{x}))\geq g(\bar{x}).
\end{align*}

Similarly, we say $u$ is a viscosity supsolution of (\ref{vis}) if for any $\bar{x}\in \Omega$ and every convex function $\varphi\in C^2(\Omega)$ satisfying
\begin{align*}
\varphi\geq u\quad in\quad \Omega\quad and \quad\varphi(\bar{x})=u(\bar{x}),
\end{align*}
we have
\begin{align*}
\det(D^2\varphi(\bar{x}))\leq g(\bar{x}).
\end{align*}

We say $u$ is a viscosity solution of (\ref{vis}) if $u$ is both a viscosity subsolution and a viscosity supsolution of (\ref{vis}).

\medskip

\begin{theo}\label{Main thm}
Let $D$ be a smooth, bounded, strictly convex domain in $\mathbb{R}^n$ for $n\geq 3$ and $\varphi\in C^2(\partial D)$. Then for any $A\in \mathcal{A}$, $b\in \mathbb{R}^n$, there exists a constant $C_*$ depending only on $n,D,\varphi,b$ and $A$, such that for every $c\geq C_*$, there exists a unique solution $u\in C^\infty(\mathbb{R}^n\setminus \bar{D})\cap C^0(\overline{\mathbb{R}^n\setminus D})$ satisfying
\begin{align*}
\begin{cases}
\det(D^2u)=1 \quad in \quad \mathbb{R}^n\setminus \bar{D},\\
u=\varphi\quad on \quad \partial D,\\
\lim_{|x|\rightarrow\infty}|u(x)-\left(\frac{1}{2}x^\prime Ax+bx+c\right)|=0.
\end{cases}
\end{align*}
While for $c<C_*$, there is no viscosity solution for the above problem.
\end{theo}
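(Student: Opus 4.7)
The plan is to show that the set $E := \{c \in \mathbb{R} : \text{the exterior problem with asymptotic } W_c \text{ admits a viscosity solution}\}$, where $W_c(x) := \tfrac{1}{2}x^\top A x + bx + c$, is of the form $[C_*, \infty)$ for a finite $C_* \leq c_1$, with $c_1$ the constant of Theorem A. The affine substitution $y = A^{1/2}x + A^{-1/2}b$ preserves the equation (as $\det A = 1$) and reduces the asymptotic to $\tfrac{1}{2}|y|^2 + \tilde c$ with $\tilde c = c - \tfrac{1}{2}b^\top A^{-1}b$, so I may assume $A = I$ and $b = 0$. The content of the theorem then amounts to: (i) $E$ is upward closed; (ii) $E$ is bounded below, so $C_* := \inf E$ is finite; (iii) $C_* \in E$. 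Uniqueness at each $c$ is a standard consequence of the exterior comparison principle for $\det D^2 = 1$.

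For (i), given a solution $u_{c_0}$ for $c_0 \in E$ and $c > c_0$, I follow the exhaustion scheme of \cite{CL}. On the annular region $\Omega_R = B_R \setminus \bar D$, solve the Dirichlet problem $\det D^2 u_R^c = 1$ in $\Omega_R$, $u_R^c = \varphi$ on $\partial D$, $u_R^c = W_c$ on $\partial B_R$. For $R$ large, $u_{c_0}$ is a smooth convex subsolution (since $u_{c_0}|_{\partial B_R} = W_{c_0} + o(1) < W_c$) and $W_c + K$ is a supersolution for $K > \max_{\partial D}(\varphi - W_c)$; the classical Monge--Amp\`ere Dirichlet theory (Caffarelli--Nirenberg--Spruck, Guan) then yields a unique smooth $u_R^c$ with $u_{c_0} \leq u_R^c \leq W_c + K$. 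Pogorelov's interior $C^2$-estimate together with Arzel\`a--Ascoli produces a smooth convex limit $u^c$ on $\mathbb{R}^n \setminus \bar D$ as $R \to \infty$. To verify the asymptotic $u^c \to W_c$, I would establish a Caffarelli--Li decay estimate $|u_R^c - W_c| \leq C|x|^{2-n}$ with $C$ independent of $R$, by comparing $u_R^c$ with radial perturbations of $W_c$ serving as super- and subsolutions that vanish on $\partial B_R$. Part (iii) is a parallel passage to the limit along a monotone decreasing sequence $u_{c_k}$ with $c_k \downarrow C_*$: the sequence is pointwise decreasing by the comparison principle, uniformly bounded below by a fixed smooth subsolution with asymptotic $\leq W_{C_* - 1}$ and boundary values $\leq \varphi$, and the uniform $O(|x|^{2-n})$ bound preserves the asymptotic in the limit.

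For (ii), I would argue by comparison with the sharp radial solutions of Theorem B. Fix $R$ with $D \subset B_R(0)$. If $u$ is a viscosity solution for $c$, its restriction to $\mathbb{R}^n \setminus \bar B_R$ is a solution with asymptotic $W_c$ and boundary values $u|_{\partial B_R}$; the asymptotic bound $u \leq W_c + o(1)$ together with convexity and $u = \varphi$ on $\partial D$ confines $m := \min_{\partial B_R} u$ into a range determined by $n, D, \varphi$ and $R$. Theorem B, scaled to $\mathbb{R}^n \setminus \bar B_R$, requires $c \geq m + R^2 C_*^{\mathrm{rad}}$ for a radial solution with boundary $m$ and asymptotic $W_c$ to exist; combined with the control on $m$, this forces a finite lower bound on $c$. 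The main obstacle in the whole argument is the uniform decay $|u_R^c - W_c| \leq C|x|^{2-n}$ in step (i): the Caffarelli--Li asymptotic analysis was carried out on the full exterior, and transferring it to the truncations $\Omega_R$ with the outer datum forced to equal $W_c$ requires radial barriers respecting both Dirichlet data $\varphi$ and $W_c$ simultaneously with $R$-independent bounds; a secondary subtlety in (ii) is that $u$ is a priori only a viscosity solution, so Caffarelli's strict convexity and regularity must be invoked to upgrade $u$ to a classical solution before the radial comparison applies.
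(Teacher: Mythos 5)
Your overall skeleton (upward closedness, a finite lower bound, solvability at the infimum, uniqueness by comparison) matches the paper, but both of your key technical steps have genuine gaps. In step (ii), the claim that $m=\min_{\partial B_R}u$ is ``confined to a range determined by $n,D,\varphi,R$'' is unsupported in the direction you need: local convexity together with $u=\varphi$ on $\partial D$ yields upper bounds at intermediate points of segments, not lower bounds, and comparison only gives $u\le \frac12|x|^2+c+C$, so a priori $m$ could be of order $c$; a $c$-independent lower bound on $m$ is essentially equivalent to the lower bound on $c$ you are trying to prove. Moreover, Theorem B concerns radially symmetric classical solutions with constant boundary data, while your $u$ is neither radial nor constant on $\partial B_R$, so its nonexistence statement cannot be invoked directly (and upgrading $u$ to a classical solution does not make it radial); what is needed is nonexistence of radially symmetric viscosity \emph{subsolutions} lying above a constant on the sphere. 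The paper's Lemma \ref{lemm1} supplies exactly this missing idea: radialize by $v(x)=\sup\{u(Tx)\colon T\ \text{orthogonal}\}$, which is a radially symmetric viscosity subsolution with the same asymptotics and $v\ge 0$ on $\partial B_1$ (after normalizing $\varphi\ge 0$), and then derive $v''\,(v')^{n-1}\ge r^{n-1}$ a.e., hence $v'(r)\ge (r^n-1)^{1/n}$ and $v(r)\ge \frac12 r^2+C$, forcing $c\ge C$.

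In step (i), solving the Dirichlet problem on the ring $B_R\setminus\bar D$ is not covered by Caffarelli--Nirenberg--Spruck (the domain is not convex), and Guan's subsolution theorem requires a subsolution attaining the boundary data, which $u_{c_0}$ does not on $\partial B_R$; more importantly, you yourself identify the uniform decay $|u_R^c-W_c|\le C|x|^{2-n}$ as the main obstacle and do not prove it, yet both your step (i) and your step (iii) rest on it. The paper's Lemma \ref{lemm2} avoids any decay rate: for $c_3<c<c_1$ one takes $u_c=\alpha u_3+(1-\alpha)u_1$, a convex combination of the given solution and a Theorem A solution with large constant, which by Alexandrov second-order differentiability, concavity of $\det^{1/n}$, and an Alexandrov--Bakelman--Pucci argument is a viscosity subsolution with asymptotic exactly $\frac12|x|^2+c$; Perron's method between $u_c$ and $u_1$ then produces the solution, the upper asymptotic bound coming from comparison with $u_3+(c-c_3)$. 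Similarly, at $c=C_*$ the paper compares the approximating solutions pairwise, obtaining $u_j\ge u_\infty\ge u_j-\frac1j$, so the asymptotics pass to the limit with no quantitative decay. Without these two ideas (the rotational sup-radialization and the convex-combination subsolution plus Perron with soft $o(1)$ asymptotics), your steps (i), (ii) and (iii) do not close.
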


\medskip

Apart from the exterior Dirichlet problem with constant $1$ on the right hand side, it is natural to consider the problem with a general right hand side. In the case that the right hand side is an appropriate perturbation of $1$ near infinity, the existence and uniqueness ware given by Bao, Li and Zhang \cite{BLZ}. More precisely, assume that
\begin{align*}
g\in  C^0(\mathbb{R}^n),\quad 0<\inf_{\mathbb{R}^n} g\leq \sup_{\mathbb{R}^n} g<\infty,
\end{align*}
and for some integer $m\geq 3$ and some constant $\beta>2$ such that $D^mg$ exists outside a compact subset of $\mathbb{R}^n$ and
\begin{align*}
\lim_{|x|\rightarrow\infty}|x|^{\beta+|\alpha|}|D^\alpha(g(x)-1)|<\infty,\quad |\alpha|=0,\cdots,m. 
\end{align*}

\begin{theoc}[\cite{BLZ}]\label{thm blz}
Let $D$ be a smooth, bounded, strictly convex domain in $\mathbb{R}^n$ for $n\geq 3$, $\varphi\in C^2(\partial D)$ and $g$ satisfy the above assumption. Then for any $A\in \mathcal{A}$ , $b\in \mathbb{R}^n$, there exists a constant $c_1$ depending only on $n,D,\varphi,b, g$ and $A$, such that for every $c>c_1$, there exists a unique viscosity solution $u$ satisfying
\begin{align*}
\begin{cases}
\det(D^2u)=g \quad in \quad \mathbb{R}^n\setminus \bar{D},\\
u=\varphi\quad on \quad \partial D,\\
\lim_{|x|\rightarrow\infty}|u(x)-\left(\frac{1}{2}x^\prime Ax+bx+c\right)|=0.
\end{cases}
\end{align*}
\end{theoc}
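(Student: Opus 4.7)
Define the existence set $E := \{c \in \mathbb{R} : \text{the exterior Dirichlet problem above admits a viscosity solution with asymptotic constant } c\}$ and set $C_* := \inf E$. Theorem A guarantees $(c_1,\infty)\subset E$, so $C_*\le c_1<\infty$, and the nonexistence for $c<C_*$ is immediate from this definition. Uniqueness for each $c$ is the standard comparison argument for convex solutions of $\det D^2 u=1$ on an exterior domain with matching boundary data and matching asymptotic profile. The substantive tasks are therefore (a) $C_* > -\infty$, (b) attainment $C_* \in E$, and (c) upward closure $c > C_* \Rightarrow c \in E$; together these give $E=[C_*,\infty)$, which is the theorem.

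For (a), after the affine change of variables $y = A^{1/2}x$ (which reduces $A$ to the identity, with $b$ absorbed into a translation of the quadratic profile), I construct an explicit radial subsolution $V(r) = \tfrac12 r^2 + \gamma + O(r^{2-n})$ of $\det D^2 V = 1$ on $\mathbb{R}^n \setminus \overline{B_{r_0}}$, with $B_{r_0}$ inscribed in the transformed domain $\tilde D$ and $V \leq \tilde\varphi$ on $\partial \tilde D$. The asymptotic constant $\gamma$ is determined explicitly by the data via the radial ODE $(V')^n = r^n - r_0^n$ and the choice of $V(r_0)$ enforcing the boundary inequality. The Monge-Amp\`ere comparison principle on the exterior of $\tilde D$ then forces every viscosity solution to dominate $V$ pointwise, hence to have asymptotic constant $\geq \gamma$, proving $C_* \geq \gamma > -\infty$.

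For (b), take $c_k \searrow C_*$ with viscosity solutions $u_k$. The quantitative asymptotic $|u_k(x) - Q(x) - c_k| \leq M|x|^{2-n}$ from \cite{CL}, with $M$ independent of $k$ because it depends only on the fixed data $D,\varphi,A,b$, together with the boundary condition $u_k = \varphi$ on $\partial D$, gives uniform bounds for $\{u_k\}$ on every compact subset of $\overline{\mathbb{R}^n \setminus D}$. Pogorelov-type interior $C^{2,\alpha}$ estimates and boundary $C^{2,\alpha}$ estimates (using strict convexity of $\partial D$ and $C^2$ regularity of $\varphi$) give $k$-independent local $C^{2,\alpha}$ control, so a subsequence converges in $C^2_{\rm loc}$ up to $\partial D$ to a convex classical solution $u$ on $\mathbb{R}^n \setminus \bar D$ with $u = \varphi$ on $\partial D$. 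The uniform $O(|x|^{2-n})$ decay in the asymptotic passes to the limit and yields $u(x) = Q(x) + C_* + o(1)$, so $C_* \in E$.

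For (c) I would adapt the Perron envelope of \cite{CL} to the threshold regime. Given $c_0 \in E \cap [C_*,c_1]$ and $c > c_0$, form
$$u_c^*(x) := \sup\{v(x) : v \text{ convex viscosity subsolution on } \mathbb{R}^n \setminus \bar D,\ v|_{\partial D} \leq \varphi,\ \limsup_{|y|\to\infty}(v(y) - Q(y)) \leq c\}.$$
This class contains $u_0$, and the standard Perron argument identifies $u_c^*$ as a viscosity solution of $\det D^2 u = 1$ with boundary value $\varphi$ and $\limsup(u_c^* - Q) \leq c$. The main obstacle, in my view the technical heart of the proof, is producing a matching lower bound $\liminf(u_c^* - Q) \geq c$. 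I would do this by placing in the Perron class an explicit subsolution $\bar v_c$ whose own asymptotic constant is exactly $c$, built as the maximum of $u_0$ near $\partial D$ with a radial Caffarelli-Li profile of the same shape as in (a) but with the reference radius and base value tuned so that the profile's asymptotic constant is $c$ and it lies below $\varphi$ on $\partial D$; the case when $c$ is close to $C_*$ will require balancing these two parameters with some care, while for $c$ large one may quote Theorem A directly. Once $\bar v_c$ is in the class, $u_c^* \geq \bar v_c$ forces $\liminf(u_c^* - Q) \geq c$, yielding $c \in E$.
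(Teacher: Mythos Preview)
You have misidentified the target. Theorem~C is a result quoted from \cite{BLZ}; the paper does not prove it but uses it as an input to Theorems~\ref{Main thm} and~\ref{thm perturbation}. There is therefore no proof in the paper to compare against. What you have actually sketched is a proof of Theorem~\ref{Main thm} (the sharp characterization $E=[C_*,\infty)$), not of Theorem~C (mere existence for $c>c_1$): your opening line invokes Theorem~A to obtain $(c_1,\infty)\subset E$ and then declares the full interval structure to be ``the theorem,'' but Theorem~C asserts only the former. You also write $\det D^2u=1$ throughout rather than $\det D^2u=g$.

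Treating your proposal instead as an attempt at Theorem~\ref{Main thm}, two points deserve comment. First, step~(a) is circular as written: to apply the comparison principle on $\mathbb{R}^n\setminus\bar D$ between your radial barrier $V$ and a solution $u$, you need $V\le u$ on large spheres, which already requires $\gamma\le c$---the very inequality you are trying to prove. The paper's Lemma~\ref{lemm1} avoids this by symmetrizing $u$ itself under the orthogonal group to produce a radial subsolution carrying the \emph{same} asymptotic constant $c$, and then extracting a lower bound on $c$ directly from the radial differential inequality. Second, step~(c) differs from the paper's Lemma~\ref{lemm2}: the paper takes the convex combination $u_c=\alpha u_3+(1-\alpha)u_1$ of two known solutions as the subsolution with asymptotic constant exactly $c$ (using concavity of $\det^{1/n}$ and an ABP argument to verify the viscosity inequality), then runs Perron above $u_c$ and below $u_1$; your proposal to glue $u_0$ with a tuned radial profile is more delicate near the threshold and unnecessary once two bracketing solutions are in hand.
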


Similar to Theorem \ref{Main thm}, we also give for this problem the characterization of the existence and nonexistence of solutions in terms of their asymptotic behavior for this problem.

\begin{theo}\label{thm perturbation}
Let $D$ be a smooth, bounded, strictly convex domain in $\mathbb{R}^n$ for $n\geq 3$, $\varphi\in C^2(\partial D)$ and $g$ satisfy the above assumption. Then for any $A\in \mathcal{A}$, $b\in \mathbb{R}^n$, there exists a constant $C_*$ depending only on $n,D,\varphi,b, g$ and $A$, such that for every $c\geq C_*$, there exists a unique viscosity solution $u$ satisfying
\begin{align*}
\begin{cases}
\det(D^2u)=g \quad in \quad \mathbb{R}^n\setminus \bar{D},\\
u=\varphi\quad on \quad \partial D,\\
\lim_{|x|\rightarrow\infty}|u(x)-\left(\frac{1}{2}x^\prime Ax+bx+c\right)|=0.
\end{cases}
\end{align*}
While for $c<C_*$, there is no viscosity solution for the above problem.
\end{theo}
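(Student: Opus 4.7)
My plan is to define $T := \{c \in \mathbb{R} : \text{the exterior Dirichlet problem has a viscosity solution with asymptote } c\}$ and $C_* := \inf T$; by Theorem C, $T \supset [c_1,\infty)$, so $C_* \le c_1 < \infty$, and the theorem reduces to showing $T = [C_*,\infty)$ together with $C_* > -\infty$, nonexistence for $c < C_*$ being then immediate. The argument decomposes into establishing the upper-set property, closedness at $C_*$, and finiteness of $C_*$.

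For the upper-set property, I would fix $c > C_*$ and pick $c_0 \in T \cap (C_*, c)$ with solution $u_{c_0}$, together with $c^* \ge \max(c_1,c)$ and its solution $u_{c^*}$ from Theorem C. The two shifted classical solutions
\[
\bar u := u_{c_0} + (c - c_0), \qquad \tilde u := u_{c^*} - (c^* - c)
\]
both solve $\det(D^2 u) = g$ with asymptote $c$ but with boundary values $\varphi + (c - c_0) > \varphi$ and $\varphi - (c^* - c) < \varphi$ respectively. Comparison applied to $u_{c^*}$ and $u_{c_0} + (c^* - c_0)$ (both having asymptote $c^*$, with boundary data $\varphi$ and $\varphi + (c^* - c_0)$) gives $\tilde u \le \bar u$ throughout $\mathbb{R}^n \setminus \overline D$. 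I would then run Perron's method
\[
u_c(x) := \sup\{ v(x) : v \text{ locally convex viscosity subsolution},\ v \le \varphi \text{ on } \partial D,\ v \le \bar u \text{ in } \mathbb{R}^n \setminus \overline D\}.
\]
The class contains $u_{c_0}$, which forces $u_c = \varphi$ on $\partial D$, and $\tilde u$, which forces the asymptote to be at least $c$, while $v \le \bar u$ forces it to be at most $c$. Standard Perron theory for convex viscosity solutions of Monge-Amp\`ere (as used in \cite{CL,BLZ}) then produces a solution $u_c$ with boundary $\varphi$ and asymptote exactly $c$, giving $c \in T$.

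For closedness, I would take $c_k \downarrow C_*$ in $T$ with solutions $u_{c_k}$. Comparison makes $u_{c_k}$ monotone decreasing in $k$ and, in view of the finiteness step, uniformly bounded below. The limit $u_\infty := \lim u_{c_k}$ is a viscosity solution of $\det(D^2 u) = g$ with $u_\infty = \varphi$ on $\partial D$ by stability, and its asymptotic constant equals $C_*$ provided one has a decay estimate
\[
\bigl| u_{c_k}(x) - \tfrac12 x' A x - b x - c_k \bigr| \le C |x|^{2-n} \quad \text{for } |x| \ge R_0
\]
with $C,R_0$ uniform in $k$; I expect to extract this from the asymptotic analyses of \cite{CL,BLZ} by verifying that their constants depend on the asymptotic constant only through $|c_k| \le \max(|C_*|, c_1)$. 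For $C_* > -\infty$, I would enclose $D \subset B_r$ and compare any exterior solution $u_c$ with a radial barrier on $\mathbb{R}^n \setminus \overline{B_r}$ of the kind furnished by Theorem B and its perturbation in \cite{BLZ}, whose threshold for existence yields a finite lower bound on $c$.

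The main obstacle is the uniform decay-to-asymptote estimate in the closedness step: it is what lets the pointwise monotone limit inherit the sharp asymptotic constant $C_*$, and while \cite{CL,BLZ} give such decay for each fixed $c$, making the rate uniform across a compact range of asymptotic constants requires carefully tracking how their barrier constants depend on $c$. The rest of the argument is a fairly standard combination of Perron's method with comparison-based barrier construction.
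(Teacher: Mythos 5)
Your overall architecture (define $C_*$ as the infimum of the existence set, prove the set is an upper ray, handle the endpoint by a monotone limit, and bound $C_*$ from below by a nonexistence result) matches the paper, but the step where you actually bound $C_*$ from below contains a genuine gap. Your plan to enclose $D\subset B_r$ and ``compare $u_c$ with a radial barrier on $\mathbb{R}^n\setminus\overline{B_r}$ of the kind furnished by Theorem B'' does not work as stated: Theorem B's sharp threshold concerns \emph{radially symmetric} solutions, and the restriction of a general solution to $\mathbb{R}^n\setminus B_r$ is not radial; moreover, any comparison argument on an exterior domain needs an ordering at infinity, which is exactly the unknown quantity $c$, while an a priori lower bound for $u$ on $\partial B_r$ \emph{independent of $c$} is not available (for $c$ very negative nothing prevents a locally convex function with asymptote $\frac{1}{2}|x|^2+c$ from being very negative on $\partial B_r$; ruling this out is essentially the statement to be proved). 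The missing idea is the paper's symmetrization (Lemmas \ref{lemm1} and \ref{lemm1-1}): after normalizing $A=I$, $b=0$, $\varphi\ge 0$, $D\subset B_1(0)$ with a touching point, one sets $v(x)=\sup\{u(Tx):T\ \text{orthogonal}\}$, checks that $v$ is a radial, nondecreasing, locally convex viscosity subsolution with $v\ge 0$ on $\partial B_1$, derives the a.e.\ ODE inequality $v''(v')^{n-1}\ge r^{n-1}$ (resp.\ with $g_1(r)=\sup_{|x|=r}g$), and integrates to get $v(r)\ge\frac{1}{2}r^2+C$ with $C$ under control, forcing $c\ge C$. It is this reduction to a radial differential inequality for subsolutions, not a barrier comparison, that yields the finite lower bound (and in fact the stronger conclusion that no \emph{subsolution} exists for $c$ below threshold).

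Two further comments. First, the ``main obstacle'' you single out in the closedness step is a detour: if $u_j$ solves the problem with constant $c_j=C_*+\frac{1}{j}$, two applications of the comparison principle give $u_j\ge u_i\ge u_j-\frac{1}{j}+\frac{1}{i}$ for $i\ge j$, hence $u_j\ge u_\infty\ge u_j-\frac{1}{j}$, so $u_j\to u_\infty$ uniformly on $\mathbb{R}^n\setminus D$ and the exact asymptotic constant $C_*$ is inherited with no uniform $|x|^{2-n}$ decay estimate and no re-tracking of constants in \cite{CL,BLZ}; this is exactly how the paper argues. Second, your upper-set step is genuinely different from the paper's Lemma \ref{lemm2}: the paper manufactures an admissible Perron subsolution with boundary value $\varphi$ and asymptote $c$ as the convex combination $\alpha u_3+(1-\alpha)u_1$ (concavity of $\det^{1/n}$, Alexandrov's theorem, an ABP argument as in the Appendix), whereas you use vertical translates of known solutions as one-sided barriers, which avoids that machinery. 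This can be made to work, but as written it needs repair: containment of $u_{c_0}$ in your class only gives $\liminf u_c\ge\varphi$ at $\partial D$, and the upper bound $\bar u$ has boundary value $\varphi+(c-c_0)>\varphi$, so to get $\limsup u_c\le\varphi$ you should compare every member of the class with $u_{c^*}$ (boundary value $\varphi$, asymptote $c^*\ge c$); also the constraint $v\le\bar u$ imposed inside the Perron class interferes with the standard bump argument for the supersolution property at points where the supremum touches $\bar u$, so either show $u_c<\bar u$ in the interior by a strong maximum principle argument or, as in the paper, impose the asymptote in the class and obtain the upper bound by comparison rather than as a constraint.
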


\medskip

The main contribution of our results is that we give a complete characterization of the existence and nonexistecne of solutions to the exterior Dirichlet problem in terms of their asymptotic behavior. Note that this is different from the interior Dirichlet problem, for which the existence and uniqueness were established by the seminal work of Caffarelli, Nirenberg and Spruck \cite{CNS}. The special feather of the exterior Dirichlet problem is that it depends on the asymptotic behavior in a non-trivial way. Our result reveals the intimate relation between the asymptotic behavior and the existence and nonexistence. Moreover, our result is also sharp in the sense that all viscosity solutions to the exterior Dirichlet problem is asymptotic to a quadratic polynomial by the result of Caffarelli and Li \cite{CL}.

\medskip

The organization of the paper is as follows: In section 2, we prove Theorem \ref{Main thm} and Theorem \ref{thm perturbation}. Theorems \ref{Main thm} is proved as follows. We first prove in Lemma \ref{lemm1} that for any given data $D,\varphi,A$ and $b$, there exists $c_2$ such that for any $c<c_2$ there exists no subsolution $u$ of
\begin{align}\label{intor}
\begin{cases}
\det(D^2u)\geq 1 \quad in \quad \mathbb{R}^n\setminus \bar{D},\\
u=\varphi \quad on \quad \partial D,
\end{cases}
\end{align}
having the asymptotic behavior $\frac{1}{2}x^\prime Ax+bx+c$.

Then we prove in Lemma \ref{lemm2} that if there is a solution of (\ref{intor}) with asymptotic behavior $\frac{1}{2}x^\prime Ax+bx+c_3$, then for every $c>c_3$, there is a solution of (\ref{intor}) with asymptotic behavior $\frac{1}{2}x^\prime Ax+bx+c$. Thus, in view of Theorem A, for any given data $D,\varphi,A$ and $b$, there exists a $C_*$ such that for every $c>C_*$ there exists a solution of (\ref{intor}) with asymptotic behavior $\frac{1}{2}x^\prime Ax+bx+c$, while for every $c<C_*$ there is no such solution. Finally we prove that there is such a solution for $c=C_*$. Theorem \ref{thm perturbation} is proved similarly.

In section 3, we extend some results to more general fully nonlinear elliptic equations.

\section{Monge-Amp\`ere equation}
In this section, we prove Theorem \ref{Main thm} and Theorem \ref{thm perturbation}. To begin with, we prove that there is no viscosity subsolution for $c$ very negative.

\begin{lemm}\label{lemm1}
Let $D$ be a smooth, bounded, strictly convex domain in $\mathbb{R}^n$ for $n\geq 3$ and $\varphi\in C^2(\partial D)$. Then for any $A\in \mathcal{A}$, $b\in \mathbb{R}^n$, there exists a constant $c_2$ depending only on $n,D,\varphi,b$ and $A$, such that for $c< c_2$, there is no viscosity subsolution $u$ satisfying
\begin{align*}
\begin{cases}
\det(D^2u)\geq 1 \quad in \quad \mathbb{R}^n\setminus \bar{D},\\
u=\varphi \quad on \quad \partial D,\\
\lim_{|x|\rightarrow\infty}|u(x)-\left(\frac{1}{2}x^\prime Ax+bx+c\right)|=0.
\end{cases}
\end{align*}
\end{lemm}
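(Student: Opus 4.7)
The plan is to reduce the statement to the sharp one-dimensional radial estimate of Wang--Bao, via an affine normalization followed by a spherical symmetrization.

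\textbf{Normalization.} The affine change of variable $y=A^{1/2}(x+A^{-1}b)$ preserves $\det(D^2u)\ge 1$ (since $\det A=1$) and absorbs the linear term $b\cdot x$ into the quadratic part together with a shift of $c$. So we may assume $A=I$ and $b=0$, and $u$ is a viscosity subsolution of $\det(D^2u)\ge 1$ in $\mathbb{R}^n\setminus\bar D$ with $u=\varphi$ on $\partial D$ and $u(x)-(\tfrac{1}{2}|x|^2+c)\to 0$ as $|x|\to\infty$.

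\textbf{Spherical symmetrization.} Set $R_D:=\max_{x\in\partial D}|x|$, so that $\bar D\subset\bar B_{R_D}$ and there exists $x^*\in\partial D\cap\partial B_{R_D}$. Define
\[
u^\#(x):=\sup_{\theta\in O(n)} u(\theta x),\qquad x\in\mathbb{R}^n\setminus B_{R_D}.
\]
Because $|\theta x|=|x|\ge R_D$, each $\theta x$ lies outside $D$, so $u^\#$ is well defined, continuous, and radial: $u^\#(x)=\phi(|x|)$. Each rotated map $u(\theta\,\cdot)$ is itself a viscosity subsolution of $\det\ge 1$ by orthogonal invariance of the determinant, and the pointwise supremum of a family of viscosity subsolutions of the Monge--Amp\`ere equation is again a subsolution (standard stability), so $u^\#$ is a viscosity subsolution of $\det(D^2u^\#)\ge 1$ in $\mathbb{R}^n\setminus\bar B_{R_D}$. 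Convexity of the resulting radial function forces $\phi$ to be convex and non-decreasing on $[R_D,\infty)$. The boundary inequality reads $\phi(R_D)\ge u(x^*)=\varphi(x^*)\ge\min_{\partial D}\varphi$, and the asymptotic of $u$ (uniform over $\theta$ by rotation-invariance of the quadratic) passes to $\phi(r)-\tfrac{1}{2}r^2\to c$ as $r\to\infty$.

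\textbf{Radial ODE estimate.} The subsolution condition for the convex non-decreasing radial function reads, in the Monge--Amp\`ere-measure sense, $\phi''(\phi'/r)^{n-1}\ge 1$, i.e., $\tfrac{d}{dr}\bigl[(\phi')^n/n\bigr]\ge r^{n-1}$. Integrating this inequality twice on $[R_D,r]$ and dropping the nonnegative contribution $(\phi'(R_D))^n$ gives
\[
\phi(r)-\tfrac{1}{2}r^2\ge\phi(R_D)-\tfrac{1}{2}R_D^2+\int_{R_D}^r\bigl[(s^n-R_D^n)^{1/n}-s\bigr]\,ds.
\]
Sending $r\to\infty$ and substituting $s=R_D t$ to evaluate the tail integral yields
\[
c\ge\phi(R_D)-R_D^2\bigl(\tfrac{1}{2}+I_0\bigr)\ge\min_{\partial D}\varphi-R_D^2\bigl(\tfrac{1}{2}+I_0\bigr)=:c_2,
\]
where $I_0:=\int_1^\infty\bigl[t-(t^n-1)^{1/n}\bigr]\,dt$ is positive and finite exactly because $n\ge 3$ (the integrand behaves like $1/(n t^{n-1})$ at infinity). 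Hence no viscosity subsolution exists for $c<c_2$, proving the lemma.

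\textbf{Main obstacle.} The delicate step is the rigorous verification that $u^\#$ inherits the viscosity subsolution property up to the boundary $\partial B_{R_D}$ (where rotation orbits can meet $\partial D$, and where one must reconcile the boundary value $\varphi(x^*)$ with the envelope $\phi(R_D)$). This rests on the standard stability result for Monge--Amp\`ere viscosity subsolutions under pointwise supremum, combined with the orthogonal invariance of $\det$. The remaining work is essentially the one-variable analysis behind the proof of Theorem B applied to the convex radial function $\phi$.
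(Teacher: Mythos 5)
Your proposal is correct and follows essentially the same route as the paper: normalize by a determinant-one affine change of variables, symmetrize by taking the supremum over rotations to obtain a radial, locally convex viscosity subsolution outside a ball touching $\partial D$, derive the a.e.\ radial inequality $\phi''(\phi'/r)^{n-1}\ge 1$, integrate it, and compare with the prescribed asymptotics to bound $c$ from below. The only difference is level of detail: the paper carefully justifies the integration step (existence and local integrability of $\phi''$, mollification plus Fatou to get $(\phi')^n(r)-(\phi')^n(t)\ge r^n-t^n$), which you compress into ``integrating this inequality twice,'' a step that is indeed justifiable by the standard inequality for a.e.\ derivatives of monotone functions.
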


\begin{proof}
Note that by an affine transformation and adding a linear function to $u$, we only need to consider $A=I$ and $b=0$. Thus without loss of generality, we may assume that $u$ has the asymptotic $\frac{|x|^2}{2}+c$,  $\varphi\geq 0$ and $D\subset B_1(0)$ such that there exists $x_0\in \partial D\cap \partial B_1(0)$.

Consider 
\begin{align}\label{defv}
v(x)=\sup\{u(Tx)| T:\mathbb{R}^n\rightarrow \mathbb{R}^n \text{ is an orthogonal transformation} \}.
\end{align}

For any compact set $K\subset \mathbb{R}^n\setminus \bar{B}_1$, we have 
\begin{align*}
|u|+|\nabla u|\leq C(K)\quad \text{in} \quad  K.
\end{align*}

It is clear from the definition of $v$ that
\begin{align*}
|v|+|\nabla v|\leq C(K) \quad \text{in} \quad K.
\end{align*}

By a standard argument, $v$ is a locally convex viscosity subsolution satisfying
\begin{align*}
\begin{cases}
\det(D^2v)\geq 1 \quad in \quad \mathbb{R}^n\setminus \overline{B_1(0)}, \\
v\geq 0 \quad on \quad \partial B_1(0),\\
\lim_{|x|\rightarrow\infty}|v(x)-\left(\frac{1}{2}|x|^2+c\right)|=0.
\end{cases}
\end{align*}

The second line is due to the fact that $\varphi\geq 0$.

Clearly $v$ is radially symmetric.

We first prove that $v$ is monotonically nondecreasing in $[1,\infty)$. Suppose not, then since $v(r)\rightarrow \infty$ as $r\rightarrow \infty$, there exists $1<r_1<r_2<r_3<\infty$ such that $v(r_1)=v(r_3)>v(r_2)$, violating the local convexity of $v$.

We already know that $v$ is locally convex, so $v$ is locally Lipschitz. Thus $v^\prime$ exists almost everywhere and $v^\prime$ is monotonically nondecreasing. Since $v$ is monotonically nondecreasing, $v^\prime\geq 0$ almost everywhere. 

On the other hand, we know that monotone functions are differentiable almost everywhere, so $v^{\prime\prime}$ exists and $v^{\prime\prime}\geq 0$ almost everythere.

To proceed, we first prove that $v^{\prime\prime}\in L^1_{loc}[1,\infty)$. Indeed, let $g_m(s)=m(v^\prime(s+\frac{1}{m})-v^\prime(s))$, then by Fatou's lemma, we have
\begin{align*}
\int_t^r v^{\prime\prime}ds&\leq \liminf_{m\rightarrow \infty} \int_t^r g_m(s)ds=\liminf_{m\rightarrow \infty}\left(m\int_r^{r+\frac{1}{m}}v^\prime(s)ds-m\int_t^{t+\frac{1}{m}}v^\prime(s)ds\right)\\
&\leq 2\sup _{[1,r+1]}v^\prime <\infty
\end{align*} 

Secondly, we show that for all $r\geq 1$ such that $v^\prime(r)$ and $v^{\prime\prime}(r)$ are defined, we have
\begin{align*}
\lim_{h\rightarrow 0}\frac{v(r+h)-v(r)-v^\prime(r)h}{h^2}=\frac{1}{2} v^{\prime\prime}(r)
\end{align*}

Indeed, we have
\begin{align*}
v(r+h)&=v(r)+\int_0^{h}v^\prime(r+s)ds=v(r)+\int_0^{h}(v^\prime(r)+v^{\prime\prime}(r)s+o(s))ds\\
&=v(r)+v^\prime(r)h+\frac{1}{2} v^{\prime\prime}(r)h^2+o(h^2).
\end{align*}
By taking limit on $h$, we have the above formula.

For any $\epsilon>0$, we can choose $h_0$ small enough such that, 
\begin{align*}
v(r+h)\leq v(r)+v^\prime(r)h+\frac{1}{2}\left(v^{\prime\prime}(r)+\epsilon\right)h^2,
\end{align*}
for any $|h|\leq h_0$. Define
\begin{align*}
\phi(r+h)=v(r)+v^\prime(r)h+\frac{1}{2}\left(v^{\prime\prime}(r)+\epsilon\right)h^2.
\end{align*} 
Thus $\phi\in C^2(B_{h_0}(r))$ satisfies $\phi\geq v$ in $B_{h_0}(r)$, $\phi(r)=v(r)$, and therefore by the definition of viscosity subsolution,
\begin{align*}
\det(D^2\phi(r))\geq 1.
\end{align*}

Let $\epsilon\rightarrow 0$, we obtain
\begin{align*}
\det(D^2v)\geq 1\quad a.e.\quad  in \quad \mathbb{R}^n\setminus \bar{B}_1(0).
\end{align*}

Note that $v$ is radially symmetric, we have
\begin{align*}
(D^2v)=diag(v^{\prime\prime}, \frac{v^\prime}{r},\cdots,\frac{v^\prime}{r}) \quad a.e.\quad  in \quad \mathbb{R}^n\setminus \bar{B}_1(0).
\end{align*}

Thus we can write the above equation as
\begin{align*}
v^{\prime\prime}\frac{{v^\prime}^{n-1}}{r^{n-1}}\geq 1\quad a.e.\quad  in \quad [1,\infty).
\end{align*}

Integrating the above, for all $1\leq t<r<\infty$, we have
\begin{align}\label{s-n-1}
\int _t^r v^{\prime\prime}{v^\prime}^{n-1}ds\geq \int_t^r s^{n-1}ds.
\end{align}

Let $\varphi_\epsilon$ be the standard mollifier, then
\begin{align*}
v_\epsilon=v*\varphi_\epsilon
\end{align*}
is a smooth sequence converging to $v$ in $C^0_{loc}(1,\infty)$.

Since $v^{\prime\prime}\in L^1_{loc}[1,\infty)$, we have
\begin{align*}
v_\epsilon^\prime=v^\prime*\varphi_\epsilon,\quad v_\epsilon^{\prime\prime}=v^{\prime\prime}*\varphi_\epsilon,
\end{align*}
and $v^\prime_\epsilon\rightarrow v^\prime$, $ v^{\prime\prime}_\epsilon\rightarrow v^{\prime\prime}$,  a.e.  in $ (1,\infty)$.

By Fatou's lemma, for a.e. $1<t<r<\infty$, we have
\begin{align*}
\int_t^r v^{\prime\prime}{v^\prime}^{n-1}ds\leq \liminf_{\epsilon\rightarrow 0}\int_t^r v_\epsilon^{\prime\prime}{v_\epsilon^\prime}^{n-1}ds.
\end{align*}
i.e.
\begin{align*}
n\int_t^r v^{\prime\prime}{v^\prime}^{n-1}ds \leq \liminf_{\epsilon\rightarrow 0}\left({v_\epsilon^\prime}^n(r)-{v_\epsilon^\prime}^n(t)\right)={v^\prime}^n(r)-{v^\prime}^n(t).
\end{align*}

Together with (\ref{s-n-1}), for a.e. $1<t<r<\infty$, we have
\begin{align*}
{v^\prime}^n(r)-{v^\prime}^n(t)\geq r^n-t^n.
\end{align*}

Since $v^\prime(t)\geq 0$ and let $t\rightarrow 1^+$, we have, for almost all $r$,
\begin{align*}
v^\prime(r)\geq \left(r^n-1\right)^{\frac{1}{n}}.
\end{align*}

Now
\begin{align*}
v(r)=v(1)+\int_1^r v^\prime ds\geq \int_1^r\left(s^n-1\right)^{\frac{1}{n}}ds\geq \frac{1}{2}r^2+C,
\end{align*}
for $r$ large enough, here $C$ is a constant under control, see in \cite{CL}. It follows that that $c\geq C$.
\end{proof}

\medskip

We now prove the nonexistence in the case that $g$ is a perturbation of $1$ at infinity.
\begin{lemm}\label{lemm1-1}
Let $D$ be a smooth, bounded, strictly convex domain in $\mathbb{R}^n$ for $n\geq 3$,  $\varphi\in C^2(\partial D)$ and $g$ satisfy the assumption in Theorem \ref{thm perturbation}. Then for any $A\in \mathcal{A}$, $b\in \mathbb{R}^n$, there exists a constant $c_2$ dpending only on $n,D,\varphi,b,g$ and $A$, such that for $c< c_2$, there is no viscosity subsolution $u$ satisfying
\begin{align*}
\begin{cases}
\det(D^2u)\geq g \quad in \quad \mathbb{R}^n\setminus \bar{D},\\
u=\varphi \quad on \quad \partial D,\\
\lim_{|x|\rightarrow\infty}|u(x)-\left(\frac{1}{2}x^\prime Ax+bx+c\right)|=0.
\end{cases}
\end{align*}
\end{lemm}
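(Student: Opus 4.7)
The proof will follow the architecture of the proof of Lemma \ref{lemm1}, modified to accommodate the non-constant right-hand side $g$. First I would perform the standard reductions. Subtracting $b\cdot x$ from $u$ changes the boundary data but leaves $\det(D^2u)$ unchanged; applying the linear change of variables $y = A^{1/2}x$ and setting $\tilde u(y) = u(A^{-1/2}y)$ reduces the asymptotic to $\frac{1}{2}|y|^2 + c$ and replaces $g$ by $\tilde g(y) = g(A^{-1/2}y)$, which again satisfies the perturbation hypothesis (with the same $\beta$ and comparable constants, since $|A^{-1/2}y| \asymp |y|$). After further normalizing so that $\varphi \geq 0$ and $D \subset B_1(0)$ with $\partial D \cap \partial B_1(0) \neq \emptyset$, I would symmetrize via $v(x) = \sup\{\tilde u(Tx) : T \text{ orthogonal}\}$. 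Because the supremum of subsolutions is a subsolution, $v$ is a radially symmetric, locally convex viscosity subsolution of
\begin{align*}
\det(D^2 v) \geq h(|x|) \quad \text{in } \mathbb{R}^n \setminus \overline{B_1(0)}, \qquad v\ge 0 \text{ on }\partial B_1(0),
\end{align*}
where $h(r) := \inf_{|x|=r}\tilde g(x)$. The decay hypothesis on $g-1$ gives $|h(r)-1|\le C r^{-\beta}$ for $r$ large.

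Second, every structural statement proved about $v$ in Lemma \ref{lemm1} — monotonicity on $[1,\infty)$, local Lipschitz regularity, existence of $v''$ a.e., membership $v'' \in L^1_{\rm loc}[1,\infty)$, the pointwise second-order expansion, and the resulting a.e.\ inequality derived by testing against the upper paraboloid comparison function — carries over verbatim, since none of these steps used that the right-hand side was constant. The outcome, after writing $D^2 v = \mathrm{diag}(v'', v'/r, \ldots, v'/r)$ in the radial coordinates, is the ODE inequality
\begin{align*}
v''(r)\,\frac{(v'(r))^{n-1}}{r^{n-1}} \geq h(r) \quad \text{a.e. in } [1,\infty).
\end{align*}
Integrating from $t$ to $r$ and passing to the limit $t\to 1^+$ via the mollification-plus-Fatou argument used for (\ref{s-n-1}) yields, for a.e.\ $r$,
\begin{align*}
(v'(r))^n \geq n\int_1^r s^{n-1} h(s)\, ds = r^n - 1 + n\int_1^r s^{n-1}(h(s)-1)\, ds.
\end{align*}

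Third, and this is where the hypothesis $\beta>2$ is essential, I would estimate the perturbation integral. Splitting at some large $R_0$ beyond which $|h(s)-1|\le Cs^{-\beta}$ holds, I obtain
\begin{align*}
\left|n\int_1^r s^{n-1}(h(s)-1)\, ds\right| \leq C\bigl(1 + r^{\max(0, n-\beta)}\bigr),
\end{align*}
so $(v'(r))^n \geq r^n\bigl(1 - C r^{-\min(\beta,n)}\bigr)$ for all $r$ large. Taking $n$-th roots and using $(1-\epsilon)^{1/n} \geq 1 - \epsilon$ for small $\epsilon$ gives $v'(r) \geq r - Cr^{1-\min(\beta,n)}$. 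Since $\beta > 2$, the correction is integrable at infinity, and integrating once more produces
\begin{align*}
v(r) \geq \tfrac{1}{2} r^2 + C_0
\end{align*}
for all $r$ large, where $C_0$ depends only on $n, D, \varphi, b, g, A$. Comparing with the asymptotic $v(r) = \frac{1}{2}r^2 + c + o(1)$ forces $c \geq C_0$, and setting $c_2 := C_0$ completes the proof. The main technical point — rather than a conceptual obstacle — is controlling the lower-order contribution from $h - 1$ so that integrability at infinity of $v'(r) - r$ survives; this is exactly what the assumption $\beta > 2$ buys us, and a weaker decay would not suffice for the argument to close.
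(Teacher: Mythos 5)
Your proposal is correct and follows essentially the same route as the paper: reduce to $A=I$, $b=0$, symmetrize over rotations, derive the radial inequality $v''\,(v')^{n-1}\geq r^{n-1}h(r)$ a.e.\ exactly as in Lemma \ref{lemm1}, and integrate twice to force $c\geq C$. The only differences are minor: you take $h(r)=\inf_{|x|=r}\tilde g$ (which is in fact what the rotation argument justifies; the paper's $g_1(r)=\sup_{|x|=r}g$ makes no difference for the conclusion since both satisfy the same decay bound), and you verify the final estimate $\int_1^r\bigl(\int_1^l ns^{n-1}h(s)\,ds\bigr)^{1/n}dl\geq \frac12 r^2+C$ directly from $\beta>2$ and $n\geq 3$ where the paper cites Lemma 3.1 of \cite{BLZ} — with the tiny caveat that in the borderline case $\beta=n$ your intermediate bound acquires a harmless $\log r$ factor.
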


\begin{proof}
Note that we can run the same argument as in Lemma \ref{lemm1}. Let 
\begin{align*}
g_1(r)=\sup_{|x|=r}g(x).
\end{align*}
Then for a.e. $1<t<r<\infty$, 
\begin{align*}
{v^\prime}^n(r)-{v^\prime}^n(t)\geq \int_t^r ns^{n-1}g_1(s)ds.
\end{align*}
Let $t\rightarrow 1^+$, we have for a.e. $r>1$, 
\begin{align*}
v^\prime(r)\geq \left(\int_1^r ns^{n-1}g_1(s)ds\right)^{\frac{1}{n}}.
\end{align*}
Thus for all $r>1$, 
\begin{align*}
v(r)\geq v(1)+\int_1^r\left(\int_1^l ns^{n-1}g_1(s)ds\right)^{\frac{1}{n}}dl.
\end{align*}
Now by Lemma 3.1 in \cite{BLZ}, we have
\begin{align*}
\int_1^r\left(\int_1^l ns^{n-1}g_1(s)ds\right)^{\frac{1}{n}}dl\geq \frac{1}{2}r^2+C,
\end{align*}
where $C$ is under control. It follows that that $c\geq C$.
\end{proof}

\medskip

As mentioned in the introduction, we have
\begin{coro}
For $n\geq 3$, let $u$ be a viscosity solution satisfying
\begin{align*}
\begin{cases}
\det(D^2u)=1 \quad in \quad \mathbb{R}^n\setminus \overline{B_1(0)}, \\
u=0\quad on \quad \partial B_1(0),\\
\lim_{|x|\rightarrow\infty}|u(x)-\left(\frac{1}{2}|x|^2+c\right)|=0.
\end{cases}
\end{align*}
Then $u$ is radially symmetric.
\end{coro}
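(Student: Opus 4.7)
The plan is to exploit the rotational invariance of the problem and apply uniqueness. Every ingredient of the exterior Dirichlet problem in the Corollary is preserved by the action of the orthogonal group $O(n)$: for any $T\in O(n)$, one has $D^2(u\circ T)(x)=T^{\top}(D^2u)(Tx)\,T$, so $\det D^2(u\circ T)=\det D^2u=1$; the domain $\mathbb{R}^n\setminus\overline{B_1(0)}$ and the boundary datum $u=0$ on $\partial B_1(0)$ are obviously $O(n)$-invariant; and the asymptotic profile $\tfrac{1}{2}|x|^2+c$ is invariant because $|Tx|=|x|$. It follows that for every $T\in O(n)$ the function $u_T(x):=u(Tx)$ is again a viscosity solution of the same exterior Dirichlet problem, with the same asymptotic constant $c$.

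The plan is then to invoke the uniqueness part of the existence/uniqueness theorem to conclude $u_T\equiv u$ for every $T\in O(n)$, which is precisely the definition of radial symmetry. Uniqueness is supplied by Theorem A when $c>c_1$, and by Theorem \ref{Main thm} when $c\ge C_*$; by Lemma \ref{lemm1}, any $c$ admitting a solution automatically satisfies $c\ge C_*$, so the uniqueness hypothesis is never vacuously unavailable. For $c<C_*$ the Corollary is vacuous, again by Lemma \ref{lemm1}.

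The only subtle point is ensuring that uniqueness is actually at hand for every $c$ for which a solution exists, in particular at the threshold $c=C_*$. If one wishes to avoid bootstrapping from Theorem \ref{Main thm} itself, a self-contained alternative is a direct comparison argument: $u$ and $u_T$ are two viscosity solutions of $\det(D^2\cdot)=1$ on $\mathbb{R}^n\setminus\overline{B_1(0)}$, agreeing on $\partial B_1(0)$, and their difference $u-u_T$ tends to $0$ at infinity since they share the same quadratic asymptotic profile. Applying the comparison principle for Monge--Amp\`ere on the annuli $B_R\setminus\overline{B_1(0)}$ and letting $R\to\infty$, absorbing the vanishing difference on $\partial B_R$ into a small linear perturbation, forces $u=u_T$.

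The main obstacle, such as it is, is purely bookkeeping: making sure the invocation of uniqueness is logically clean and does not create circularity with the proof of Theorem \ref{Main thm}. The geometric content is essentially trivial once the $O(n)$-invariance of all the data is recorded, and no new analytic estimate beyond what appears in Lemma \ref{lemm1} and the comparison principle is required.
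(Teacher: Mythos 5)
Your argument is correct, but it runs along a slightly different track than the paper's. The paper reuses the symmetrization already introduced in Lemma \ref{lemm1}: it sets $v(x)=\sup\{u(Tx)\,|\,T\in O(n)\}$, observes that $v$ is a radially symmetric viscosity \emph{subsolution} with $v=0$ on $\partial B_1(0)$ and the same asymptotics, and then compares $v$ with $u+\epsilon$ on annuli $1\le|x|\le R$ (comparison principle, Proposition 2.1 of \cite{CL}), letting $\epsilon\to0$ to get $u\ge v$; since trivially $v\ge u$, one concludes $u=v$. You instead compare $u$ with each rotated copy $u_T(x)=u(Tx)$ individually: since $u_T$ is again a viscosity solution of the same problem (same boundary value, same asymptotic constant), the comparison principle on $B_R\setminus\overline{B_1(0)}$ with the small shift $u+\epsilon$ (a constant shift suffices here; your phrase ``small linear perturbation'' should just be a small constant, which leaves the Hessian untouched) gives $u_T\le u+\epsilon$, and sending $R\to\infty$, $\epsilon\to0$ and symmetrizing in $T$ yields $u_T\equiv u$ for all $T$, i.e.\ radial symmetry. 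This rotation-by-rotation uniqueness argument is, if anything, more elementary: it avoids having to know that the supremum over rotations is again a (locally convex) subsolution, needing only the obvious $O(n)$-invariance of the operator, domain, boundary datum and asymptotic profile, plus the same comparison principle the paper uses. Your first suggested route---quoting the uniqueness clauses of Theorem A or Theorem \ref{Main thm}---is the weaker option: Theorem A's uniqueness is stated only for $c>c_1$ and for solutions in $C^\infty\cap C^0$ rather than general viscosity solutions, and the uniqueness asserted in Theorem \ref{Main thm} is itself ultimately the same comparison argument; so the self-contained comparison proof you sketch is the one to keep, and it is sound.
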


\begin{proof}
Let $u$ be a viscosity solution for the above problem and let $v$ be defined as in (\ref{defv}). Then $v$ is a locally convex viscosity subsolution to the exterior Dirichlet problem
\begin{align*}
\begin{cases}
\det(D^2v)\geq 1 \quad in \quad \mathbb{R}^n\setminus \overline{B_1(0)}, \\
v= 0 \quad on \quad \partial B_1(0),\\
\lim_{|x|\rightarrow\infty}|v(x)-\left(\frac{1}{2}|x|^2+c\right)|=0.
\end{cases}
\end{align*}

Clearly $v$ is radially symmetric.

$\forall \epsilon>0$, there exists $R>1$ such that
\begin{align*}
u(x)+\epsilon\geq v(x), \quad |x|\geq R.
\end{align*}

Applying the comparison principle, see e.g. Proposition 2.1 in \cite{CL}, we have $u(x)+\epsilon\geq v(x)$ for $1\leq |x|\leq R$. Thus $u+\epsilon\geq v$ in $\mathbb{R}^n\setminus B_1(0)$. Sending $\epsilon$ to $0$, it gives $u\geq v$ in  $\mathbb{R}^n\setminus B_1(0)$.

On the other hand, by the definition of $v$, we have $v\geq u$. Thus $u=v$ is radially symmetric.

\end{proof}

\medskip

We now prove that if we have a viscosity solution with $c_3$, then we have a viscosity solution with all $c\geq c_3$.

\begin{lemm}\label{lemm2}
Let $D$ be a smooth, bounded, strictly convex domain in $\mathbb{R}^n$ for $n\geq 3$, $\varphi\in C^2(\partial D)$, $g$ satisfy the assumption in Theorem \ref{thm perturbation},  $A\in \mathcal{A}$ and $b\in \mathbb{R}^n$. Suppose that there exists a viscosity solution $u_3$ satisfying
\begin{align*}
\begin{cases}
\det(D^2u)=g \quad in \quad \mathbb{R}^n\setminus \bar{D},\\
u=\varphi \quad on \quad \partial D,\\
\lim_{|x|\rightarrow\infty}|u(x)-\left(\frac{1}{2}x^\prime Ax+bx+c_3\right)|=0.
\end{cases}
\end{align*}

Then for all $c\geq c_3$, there exists a viscosity solution $u$  satisfying
\begin{align*}
\begin{cases}
\det(D^2u)=g \quad in \quad \mathbb{R}^n\setminus \bar{D},\\
u=\varphi \quad on \quad \partial D,\\
\lim_{|x|\rightarrow\infty}|u(x)-\left(\frac{1}{2}x^\prime Ax+bx+c\right)|=0.
\end{cases}
\end{align*}
\end{lemm}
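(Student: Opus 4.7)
I would prove Lemma \ref{lemm2} by the exterior-approximation method used in \cite{CL}. For $c = c_3$ the statement is immediate with $u = u_3$, so assume $c > c_3$ and set $P_c(x) = \frac{1}{2}x'Ax + bx + c$; note that $u_3$ is asymptotic to $P_c - (c - c_3)$.

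The first step is to solve, for each sufficiently large $R$, the classical Dirichlet problem
\[
\det D^2 u_R = g \text{ in } B_R \setminus \bar{D}, \quad u_R = \varphi \text{ on } \partial D, \quad u_R = P_c \text{ on } \partial B_R.
\]
This is standard on the strictly convex annular region by Caffarelli--Nirenberg--Spruck-type machinery, with the required sub- and supersolutions supplied by $u_3$ (which equals $\varphi$ on $\partial D$, and because $u_3 - P_c \to -(c - c_3) < 0$ at infinity, satisfies $u_3 \leq P_c$ on $\partial B_R$ for all $R$ large) and by $u_3 + (c - c_3) + \epsilon_R$ with $\epsilon_R := \sup_{\partial B_R}|u_3 + (c - c_3) - P_c| \to 0$. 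Comparison on the annulus yields $u_3 \leq u_R \leq u_3 + (c - c_3) + \epsilon_R$.

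Sending $R \to \infty$ and invoking interior $C^{2,\alpha}$ estimates for convex Monge-Amp\`ere solutions together with this uniform sandwich, I extract a locally uniform limit $u$ solving $\det D^2 u = g$ in $\mathbb{R}^n \setminus \bar{D}$ with $u_3 \leq u \leq u_3 + (c - c_3)$. Boundary continuity $u = \varphi$ on $\partial D$ follows from the lower barrier $u_3$ together with an upper barrier built from the strict convexity of $D$ and $\varphi \in C^2(\partial D)$ (uniform in $R$, so it passes to the limit). Uniqueness is standard via the comparison principle (Proposition 2.1 in \cite{CL}).

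The main obstacle is pinning down the sharp asymptotic $u(x) - P_c(x) \to 0$ as $|x| \to \infty$. The bound $\limsup (u - P_c) \leq 0$ is immediate from $u \leq u_3 + (c - c_3)$. For the matching $\liminf (u - P_c) \geq 0$, I would invoke the Caffarelli--Li asymptotic theorem of \cite{CL}, which forces $u$ to be asymptotic to some quadratic $\frac{1}{2}x'\tilde A x + \tilde b x + \tilde c$ with $\tilde A \in \mathcal{A}$; the sandwich by $u_3$ and $u_3 + (c - c_3)$, combined with $\det \tilde A = \det A = 1$, pins $\tilde A = A$ and $\tilde b = b$, reducing matters to proving $\tilde c = c$. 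To nail $\tilde c = c$ I would construct a suitable comparison function $\underline w$ on the exterior with $\underline w \leq u_R$ on $B_R \setminus B_\rho$ exploiting the finite-radius boundary value $u_R = P_c$ on $\partial B_R$, and satisfying $\underline w(x) - P_c(x) \to 0$ at infinity---for instance via the radial Monge-Amp\`ere ODE or an explicit small perturbation of $P_c$ as in \cite{CL, BLZ}---and then pass to the limit $R \to \infty$. This asymptotic identification is the crux of the argument.
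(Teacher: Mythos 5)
Your scheme (exhaust by annuli $B_R\setminus\bar D$ with data $P_c$ on $\partial B_R$, sandwich between $u_3$ and $u_3+(c-c_3)+\epsilon_R$, pass to the limit) leaves the actual content of the lemma unproved. The sandwich only gives $c_3\leq \liminf(u-P_c)+c \leq \limsup(u-P_c)+c\leq c$, and invoking the Caffarelli--Li asymptotic theorem does not help: it tells you $u$ is asymptotic to some $\frac12 x'Ax+bx+\tilde c$ with $c_3\le\tilde c\le c$, which is exactly where you started. The step you yourself call the crux --- constructing a lower comparison function $\underline w$ with asymptotic constant exactly $c$, lying below $u_R$ on both components of $\partial(B_R\setminus B_\rho)$ uniformly in $R$ --- is only announced (``I would construct\dots via the radial Monge-Amp\`ere ODE or a perturbation of $P_c$ as in [CL, BLZ]''), not carried out. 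This is not a routine citation: on the inner sphere the only $R$-independent lower bound you have for $u_R$ is $u_3\approx P_{c_3}$, so the barrier must climb from roughly $\frac12\rho^2+c_3$ at $r=\rho$ to asymptotic constant $c$ while remaining a subsolution of $\det D^2 w\ge g$ (not just $\ge 1$), and must also stay below $P_c$ on each $\partial B_R$; verifying all of this (it requires choosing $\rho$ large and tuning the radial family, plus handling the $g\not\equiv 1$ tail as in the BLZ barriers) is precisely the nontrivial work, and without it the limit could a priori have $\tilde c<c$. A secondary gap: solvability of the Dirichlet problem on the annulus with continuous attainment of $\varphi$ on $\partial D$ is not ``standard CNS machinery,'' since $B_R\setminus\bar D$ is not convex; one needs a Perron argument with boundary barriers at the concave inner boundary, i.e.\ essentially re-deriving parts of \cite{CL}.

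For contrast, the paper avoids this issue entirely: it takes $u_1$, a solution with a large constant $c_1$ supplied by Theorem C, writes $c=\alpha c_3+(1-\alpha)c_1$, and shows that the convex combination $u_c=\alpha u_3+(1-\alpha)u_1$ is a viscosity subsolution (via Alexandrov a.e.\ second differentiability, concavity of $\det^{1/n}$, and an ABP argument to pass from the a.e.\ inequality to the viscosity sense). Since $u_c$ has asymptotic constant exactly $c$ by construction, the Perron supremum of subsolutions with data $\varphi$ automatically satisfies $\liminf(u-P_c)\ge 0$, while the comparison principle against $u_3+(c-c_3)$ gives the matching upper bound. If you want to salvage your route, the barrier construction must be done in full; otherwise the argument, as written, does not establish the asymptotic behavior that the lemma asserts.
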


\begin{proof}
By Theorem C, there exists a viscosity solution $u$  satisfying
\begin{align*}
\begin{cases}
\det(D^2u)=g \quad in \quad \mathbb{R}^n\setminus \bar{D},\\
u=\varphi \quad on \quad \partial D,\\
\lim_{|x|\rightarrow\infty}|u(x)-\left(\frac{1}{2}x^\prime Ax+bx+c\right)|=0.
\end{cases}
\end{align*}
for all  $c\geq  c_1>c_3$. 

Thus we only need to prove that there exists a viscosity solution $u$ for all $c_3<c< c_1$. 

As before, we may assume without loss of generality that $A=I$ and $b=0$. 

We consider viscosity subsolutions $v$ to the exterior Dirichlet problem
\begin{align}\label{visdirichlet}
\begin{cases}
\det(D^2v)\geq g\quad in \quad \mathbb{R}^n\setminus \bar{D},\\
v= \varphi \quad on \quad \partial D,\\
\lim_{|x|\rightarrow\infty}|v(x)-\left(\frac{1}{2}|x|^2+c\right)|=0.
\end{cases}
\end{align}

First of all, we show that there exists at least one viscosity subsolution.

Let $u_3$ and $u_1$ be solutions with asymptotic behavior $\frac{1}{2}|x|^2+c_3$ and $\frac{1}{2}|x|^2+c_1$ respectively.

Note that $c=\alpha c_3+(1-\alpha) c_1$ for some $0<\alpha<1$ and let 
\begin{align*}
u_c:=\alpha u_3+(1-\alpha) u_1.
\end{align*}

By Alexandrov's theorem \cite{A}, locally convex functions are second order differentiable almost everywhere. Thus both $u_1$ and $u_3$ are second order differentiable almost everywhere and $D^2u_1\geq 0, D^2u_3\geq 0$ a.e. 

We first prove that $\det(D^2u_1)=g$ a.e. in $\mathbb{R}^n\setminus \bar{D}$. Let $\bar{x}\in \mathbb{R}^n\setminus \bar{D}$ be a point such that $u_1$ is second order differentiable, without loss of generality, let $\bar{x}=0$. Define 
\begin{align*}
\phi_\delta(x)=u_1(0)+D u_1(0)\cdot x+ \frac{1}{2} x^\prime ( D^2u_1(0)+\delta) x.
\end{align*}
Clearly $\phi_\delta(0)=u_1(0)$. Choose $\delta$ so small such that $\phi_\delta(x)\geq u_1(x)$ near $0$, since $u_1$ is a viscosity subsolution, we have
\begin{align*}
\det( D^2\phi_\delta(0))\geq g(0).
\end{align*}
Sending $\delta$ to $0$, we have $\det(D^2u_1(0) )\geq g(0)$. In particular, $D^2u_1(0)>0$.

Similary, define
\begin{align*}
\phi_\delta(x)=u_1(0)+D u_1(0)\cdot x+\frac{1}{2}  x^\prime ( D^2u_1(0)-\delta) x.
\end{align*}
Clearly $\phi_\delta(0)=u_1(0)$. Choose $\delta$ so small such that $\phi_\delta(x)\leq u_1(x)$ near $0$ and that $\phi_\delta$ is convex, since $u_1$ is a viscosity supsolution, we have
\begin{align*}
\det( D^2\phi_\delta(0))\leq g(0).
\end{align*}
Sending $\delta$ to $0$, we have $\det(D^2u_1)(0) \leq g(0)$.

Thus we have
\begin{align*}
\det(D^2u_1)=g,\quad a.e.
\end{align*}

Similarly
\begin{align*}
\det(D^2u_3)=g,\quad a.e.
\end{align*}

By the concavity of $\det^{\frac{1}{n}}$, we have
\begin{align*}
\det(D^2u_c)^{\frac{1}{n}}\geq \alpha \det(D^2u_3)^{\frac{1}{n}}+(1-\alpha)\det(D^2u_1)^{\frac{1}{n}}=g^{\frac{1}{n}},\quad a.e.
\end{align*}

\medskip

We now prove that $u_c$ is a viscosity subsolution.

Let $\bar{x}\in \Omega$ be an arbitrary point, say $\bar{x}=0$. For any $\phi\in C^2$, such that $\phi(0)=u_c(0)$ and $\phi(x)\geq u_c(x)$ for $x$ near $0$.

For $\delta>0$ small, define $\phi_\delta(x)=\phi(x)+\delta |x|^2$, then 
\begin{align*}
\phi_\delta(x)\geq u_c(x)+\delta^3, \quad for\quad  |x|=\delta.
\end{align*}

Consider
\begin{align*}
\xi(x)=\phi_\delta(x)-u_c(x)-\delta^4.
\end{align*}

Then $\xi(0)=-\delta^4$ and $\xi(x)> 0$ for $|x|=\delta$. Since $D^2u_c\geq 0$ almost everythere, we have $D^2\xi\leq C$ almost everywhere. It follows from the Alexandrov-Bakelman-Pucci inequality that
\begin{align*}
\delta^4\leq C\left(\int_{\{\xi=\Gamma_\xi\}}\det(D^2\Gamma_\xi)\right)^{\frac{1}{n}},
\end{align*}
where $\Gamma_\xi$ is the convex envelope of $\xi$.

Thus 
\begin{align*}
\int_{\{\xi=\Gamma_\xi\}}\det(D^2\Gamma_\xi)>0.
\end{align*}
In particular $\{\xi=\Gamma_\xi\}$ has positive Lebesgue measure. Let $x\in \{\xi=\Gamma_\xi\}\cap B_\delta(0)$ where $u_c$ is second order differentiable and $\det(D^2u_c(x))\geq g(x)$. Thus
\begin{align*}
\xi(y)\geq \Gamma_\xi(x)+\nabla \Gamma_\xi(x)\cdot (y-x),
\end{align*}
and $D^2\xi (x)\geq 0$, i.e.
\begin{align*}
D^2\phi_\delta (x)\geq D^2u_c(x).
\end{align*}

It follows that $\det( D^2\phi_\delta(x) )\geq\det(D^2u_c(x))\geq g(x)$. Sending $\delta$ to $0$, we have $\det( D^2\phi (0))\geq g(0)$. Thus $u_c$ is a viscosity subsolution. 

On the other hand,
\begin{align*}
u_c=\varphi \quad on\quad \partial D,
\end{align*}
and
\begin{align*}
\lim_{x\rightarrow \infty} (u_c- \frac{1}{2}|x|^2)= c.
\end{align*}

Thus $u_c$ is a viscosity subsolution of (\ref{visdirichlet}).

\medskip

Now for every viscosity subsolution $v$ of (\ref{visdirichlet}), we have
\begin{align*}
v= u_3=\varphi \quad on \quad \partial D,
\end{align*}
and 
\begin{align*}
\lim_{|x|\rightarrow \infty}(v(x)-u_3(x))=c-c_3>0.
\end{align*}

We deduce from the comparison principle, applied to $v$ and $u_3+c-c_3$, as before that
\begin{align*}
v-u_3\leq  c-c_3\quad on \quad \mathbb{R}^n\setminus \bar{D}.
\end{align*}

Let 
\begin{align*}
u(x)=\sup\{v(x)|v \text{ is a viscosity subsolution of }  (\ref{visdirichlet})\}.
\end{align*}

Now by the comparison principle, $v\leq u_1$ for all such $v$, thus $u\leq u_1$. On the other hand, $u_c$ is a viscosity subsolution, thus $u\geq u_c$. Then a standard argument shows that $u$ is a viscosity solution of
\begin{align*}
\begin{cases}
\det(D^2u)= g \quad in \quad \mathbb{R}^n\setminus \bar{D},\\
u= \varphi \quad on \quad \partial D.
\end{cases}
\end{align*}

Now we only need to consider the asymptotic behavior. Since for all such $v$, we have
\begin{align*}
\lim_{|x|\rightarrow \infty}(v(x)-\frac{1}{2}|x|^2)=c.
\end{align*}
Thus
\begin{align*}
\liminf_{|x|\rightarrow \infty}(u(x)-\frac{1}{2}|x|^2)\geq  c.
\end{align*}

On the other hand, as $v-u_3\leq  c-c_3$ for all such $v$, we have $u-u_3\leq c-c_3$ in $\mathbb{R}^n\setminus D$, and therefore
\begin{align*}
\limsup_{|x|\rightarrow \infty}(u(x)-\frac{1}{2}|x|^2)\leq \lim_{|x|\rightarrow \infty}(u_3+c-c_3-\frac{1}{2}|x|^2)=c.
\end{align*}

Thus $u$ has the asymptotic behavior $\frac{1}{2}|x|^2+c$. 

\end{proof}

\medskip

Proof of the Theorem \ref{Main thm} and Theorem \ref{thm perturbation}:
\begin{proof}
By Theorem A and Theorem C, there exists a unique viscosity solution for $c> c_1$, without loss of generality, we assume that $c_1$ is the infimum of all such $c_1$. On the other hand,  by Lemma \ref{lemm1} and Lemma \ref{lemm1-1}, there exists $c_2$ such that there is no viscosity solution for $c<c_2$, without loss of generality, we assume that $c_2$ is the supremum of all such $c_2$. Now we prove that $c_1=c_2$. Indeed, if this is not true, then there exists $c_2\leq  c_3< c_1$ such that we have a viscosity solution with asymptotic behavior $\frac{1}{2}x^\prime A x+bx+c_3$, for otherwise it violates the fact that $c_2$ is the supereme of all such $c_2$. Now by Lemma \ref{lemm2}, we know that for all $c\geq c_3$, we have viscosity solution with asymptotic behavior $\frac{1}{2}x^\prime A x+bx+c$. This violates the fact that $c_1$ is the infimum of all such $c_1$.

Now we may denote the unique constant $C_*=c_1=c_2$. By the discussion above, we have viscosity solution for all $c>C_*$ and we have no viscosity solution for all $c<C_*$. We only need to prove that we have viscosity solution for $C_*$.

By the above, we have a sequence of viscosity solutions $u_j$ satisfying
\begin{align*}
\begin{cases}
\det(D^2u_j)=g \quad in \quad \mathbb{R}^n\setminus \bar{D},\\
u_j=\varphi \quad on \quad \partial D,\\
\lim_{|x|\rightarrow\infty}|u_j(x)-\left(\frac{1}{2}x^\prime Ax+bx+C_*+\frac{1}{j}\right)|=0.
\end{cases}
\end{align*}

By comparison principle, we know that $ u_i\geq u_j$, $\forall i\leq j$. Thus $u_j$ is monotonically nonincreasing as $j\rightarrow\infty$.

Define
\begin{align*}
u_\infty(x)=\lim_{j\rightarrow \infty}u_j(x), \quad x\in \mathbb{R}^n\setminus D.
\end{align*}

We claim that $u_\infty$ is a viscosity solution of 
\begin{align}\label{vissup}
\begin{cases}
\det(D^2v)= g \quad in \quad \mathbb{R}^n\setminus \bar{D},\\
v=\varphi \quad on \quad \partial D,\\
\lim_{|x|\rightarrow\infty}|v(x)-\left(\frac{1}{2}x^\prime Ax+bx+C_*\right)|=0.
\end{cases}
\end{align}

We note that by the comparison principle, $u_i\geq u_j-\frac{1}{j}+\frac{1}{i}$ for all $i\geq  j$. Sending $i$ to $\infty$, we have $u_j\geq u_\infty\geq u_j-\frac{1}{j}$ for all $j$. Thus
\begin{align*}
\lim_{j\rightarrow\infty}\|u_j-u_\infty\|_{L^\infty(\mathbb{R}^n\setminus D)}=0.
\end{align*} 
It follows that $\lim_{|x|\rightarrow\infty}|u_\infty(x)-\left(\frac{1}{2}x^\prime Ax+bx+C_*\right)|=0$.

Since $\{u_j\}$ satisfies the first two lines in (\ref{vissup}), and $u_j\rightarrow u_\infty$ uniformly, it is standard that $u_\infty$ also satisfies the first two lines of (\ref{vissup}).

\end{proof}

\section{Further discussions}
In this section, we generalize some results above to more general fully nonlinear elliptic equations. To begin with, let us recall some definitions. We have the following two equivalent definitions for the equations, see \cite{LL} for more detail. 

{\bf Definition 1} : Let $\Gamma \subset \mathbb{R}^n$ be an open convex symmetric cone with vertex at the origin satisfying
\begin{align*}
\Gamma_n\subset \Gamma \subset \Gamma_1:= \{\lambda\in \mathbb{R}^n|\sum_{i=1}^n\lambda_i>0\},
\end{align*}
where $\Gamma_n:=\{\lambda\in \mathbb{R}^n|\lambda_i>0,i=1,\cdots,n\}$ is the positive cone.

Here $\Gamma$ being symmetric means $(\lambda_1,\cdots,\lambda_n)\in \Gamma$ implies $(\lambda_{i_1},\cdots,\lambda_{\lambda_n})\in \Gamma$ for any perturbation $(i_1,\cdots,i_n)$ of $(1,\cdots,n)$.

Assume that $f\in C^1(\Gamma)\cap C^0(\bar{\Gamma})$ is concave and symmetric in $\lambda_i$ satisfying
\begin{align*}
f|_{\partial\Gamma}=0,\quad \nabla f\in \Gamma_n \quad on\quad \Gamma,
\end{align*}
and
\begin{align*}
\lim_{s\rightarrow\infty} f(s\lambda)=\infty,\quad \lambda\in \Gamma.
\end{align*}

Then the equation is
\begin{align*}
f(\lambda(D^2u))=1,\quad \lambda(D^2u)\in \Gamma.
\end{align*}

{\bf Definition 2}: Let $V$ be an open symmetric convex subset of $\mathbb{R}^n$ with $\partial V\neq \emptyset$ and $\partial V\in C^1$. Assume that 
\begin{align*}
\nu(\lambda)\in \Gamma_n,\quad \forall \lambda\in \partial V,
\end{align*}
and
\begin{align*}
\nu(\lambda)\cdot \lambda>0,\quad \forall \lambda\in \partial V,
\end{align*}
where $\nu(\lambda)$ is the inner unit normal of $\partial V$ at $\lambda$.

Then the equation is
\begin{align}\label{visgeneral}
\lambda(D^2u)\in \partial V.
\end{align}

\begin{rema}
In particular, if $f=\det^{\frac{1}{n}}$, then $\Gamma=\Gamma_n$. If $f=\sigma_k^{\frac{1}{k}}$, the $k$-th elementary symmetric function, then $\Gamma=\Gamma_k:=\{\lambda\in \mathbb{R}^n|\sigma_j(\lambda)>0,j=1,\cdots,k\}$. If $f=\left(\frac{\sigma_k}{\sigma_l}\right)^{\frac{1}{k-l}}$ for $k>l$, then $\Gamma=\Gamma_k$.
\end{rema}

\medskip

To proceed, let us recall the definition of viscosity solutions, see \cite{L09, LNW} in this context. We first define the upper semi-continous and lower semi-continuous functions.  Let $S\subset\mathbb{R}^n$, we denote $USC(S)$ the set of functions $\phi: S\rightarrow\mathbb{R}\cup \{-\infty\}$, $\phi\neq -\infty$ in $S$, satisfying
\begin{align*}
\limsup_{x\rightarrow \bar{x}}\phi(x)=\phi(\bar{x}),\quad \bar{x}\in S.
\end{align*}

Similarly, we denote $LSC(S)$ the set of functions $\phi: S\rightarrow\mathbb{R}\cup \{+\infty\}$, $\phi\neq +\infty$ in $S$, satisfying
\begin{align*}
\liminf_{x\rightarrow \bar{x}}\phi(x)=\phi(\bar{x}),\quad \bar{x}\in S.
\end{align*}

\medskip

Let $\Omega$ be a domain in $\mathbb{R}^n$.  For a function $u$ in  $USC(\Omega)$, we say $u$ is a viscosity subsolution in $\Omega$ if for any $x_0\in \Omega$, $\varphi\in C^2(\Omega)$ with 
\begin{align*}
(u-\varphi)(x_0)=0,\quad u-\varphi\leq 0 \quad in \quad \Omega.
\end{align*}
there holds 
\begin{align*}
\lambda(D^2\varphi)\in \bar{V}.
\end{align*}

For a function $u$ in  $LSC(\Omega)$, we say $u$ is a viscosity supsolution in $\Omega$ if for any $x_0\in \Omega$, $\varphi\in C^2(\Omega)$ with 
\begin{align*}
(u-\varphi)(x_0)=0,\quad u-\varphi\geq 0 \quad in \quad \Omega.
\end{align*}
there holds 
\begin{align*}
\lambda(D^2\varphi)\in \mathbb{R}^n\setminus V.
\end{align*}

We say that a function $u\in C^0(\Omega)$ is a viscosity solution of
\begin{align*}
\lambda(D^2u)\in \partial V,
\end{align*}
in $\Omega$ if it is both a viscosity subsolution and a viscosity supsolution.

\medskip

We now state the comparison principle, see Theroem 1.7 in \cite{LNW}.
\begin{lemm}\label{comparison lemma}
Let $\Omega$ be a domain in $\mathbb{R}^n$, assume that $u\in USC(\bar{\Omega})$ and $v\in LSC(\bar{\Omega})$ are respectively a viscosity subsolution and a viscosity supsolution of (\ref{visgeneral}) in $\Omega$ satisfying $u\geq v$ on $\partial\Omega$. Then $u\geq v$ in $\Omega$.
\end{lemm}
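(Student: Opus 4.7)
The result is Theorem 1.7 of \cite{LNW}, so the cleanest ``proof'' is to cite that. Were I to sketch a direct argument, the natural route is the doubling-of-variables technique of Crandall--Ishii--Lions, adapted to the fully nonlinear framework defined by the symmetric convex cone $V$ with outward unit normal in $\Gamma_n$ along $\partial V$.

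I would argue by contradiction: supposing the conclusion fails, the USC function $u - v$ would attain a proper extremum at some interior point $x_0 \in \Omega$. Because the operator is only degenerate elliptic, I would first pass from weak to strict sub- and supersolutions by a small perturbation, replacing $u$ by $u_\delta := u - \delta \psi$, where $\psi$ is smooth, vanishes on $\partial \Omega$, and has $D^2\psi$ of the appropriate definite sign near $x_0$. The structural hypotheses $\nu(\lambda)\in\Gamma_n$ and $\nu(\lambda)\cdot\lambda > 0$ on $\partial V$ guarantee that adding a positive-semidefinite term to the Hessian moves the eigenvalue vector strictly into $V$, so that $u_\delta$ becomes a strict viscosity subsolution while the sign of the extremum of $u_\delta - v$ inside $\Omega$ is preserved for $\delta$ small.

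Next, I would double the variables with the usual quadratic penalty:
\[
\Phi_\varepsilon(x,y) := u_\delta(x) - v(y) - \frac{1}{2\varepsilon}|x-y|^2,
\]
and let $(x_\varepsilon,y_\varepsilon) \in \bar\Omega \times \bar\Omega$ be a maximizer. Standard compactness shows $x_\varepsilon, y_\varepsilon \to x_0$ and $|x_\varepsilon - y_\varepsilon|^2/\varepsilon \to 0$, with $(x_\varepsilon, y_\varepsilon)$ remaining interior for $\varepsilon$ small. The Crandall--Ishii matrix theorem then produces symmetric $X_\varepsilon, Y_\varepsilon$ with $(p_\varepsilon, X_\varepsilon) \in \bar{J}^{2,+} u_\delta(x_\varepsilon)$ and $(p_\varepsilon, Y_\varepsilon) \in \bar{J}^{2,-} v(y_\varepsilon)$, where $p_\varepsilon = (x_\varepsilon-y_\varepsilon)/\varepsilon$, together with a matrix ordering of the form $X_\varepsilon \leq Y_\varepsilon + o_\alpha(1)$. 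The strict subsolution condition gives $\lambda(X_\varepsilon)$ strictly inside $V$, whereas the supersolution condition gives $\lambda(Y_\varepsilon)\in\mathbb{R}^n\setminus V$. Since $V$ is open, convex, and $\Gamma_n$-monotone (the condition $\nu(\lambda)\in\Gamma_n$ on $\partial V$ ensures that $V$ is preserved under adding a positive-semidefinite matrix to the Hessian), the matrix ordering forces $\lambda(Y_\varepsilon) \in V$ for $\varepsilon, \alpha$ small, the desired contradiction.

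The main obstacle I expect is the strict-perturbation step: constructing $\psi$ and choosing $\delta$ so that $u_\delta$ is a genuinely strict viscosity subsolution in a neighborhood of $x_0$. For Monge--Amp\`ere this is quantifiable through Minkowski's determinant inequality, but in the general cone setting one needs to use the geometry of $\partial V$---specifically the normal condition $\nu\in\Gamma_n$ and the convexity of $V$---more carefully. Once this perturbation is secured, the remainder of the argument is a routine application of the Ishii--Lions template.
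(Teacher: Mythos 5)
The paper gives no argument of its own for this lemma---it simply states it with the citation ``see Theorem 1.7 in \cite{LNW}''---so your citation is exactly the paper's approach. Your Ishii--Lions sketch is a reasonable outline of how such a comparison principle is established (the key structural point being that $\nu(\lambda)\in\Gamma_n$ makes $\bar V$ monotone under adding positive semidefinite matrices, which yields the strict-perturbation and the final contradiction), with the usual caveat that the doubling-of-variables step requires $\Omega$ bounded, which is how the lemma is actually used in the paper (asymptotics at infinity reduce matters to bounded regions).
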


\medskip

We now state the first result in this section.

\begin{lemm}\label{lemm3}
Let $D$ be a smooth, bounded, strictly convex domain in $\mathbb{R}^n$ for $n\geq 3$, let $\varphi\in C^2(\partial D)$, let $A$ be a real positive definite $n\times n$ symmetric matrix such that $\lambda(A)\in \partial V$, $b\in \mathbb{R}^n$. Suppose there exist a two constants $c_3<c_1$ such that there exist viscosity solutions $u_3$ and $u_1$ satisfying
\begin{align*}
\begin{cases}
\lambda(D^2u)\in \partial V \quad in \quad \mathbb{R}^n\setminus \bar{D},\\
u=\varphi \quad on \quad \partial D.
\end{cases}
\end{align*}
with asymptotic behavior $\frac{1}{2}x^\prime Ax+bx+c_3$ and $\frac{1}{2}x^\prime Ax+bx+c_1$ respectively as $|x|\rightarrow \infty$. 

Then for any $c_3<c<c_1$, there exists a viscosity solution $u$
satisfying
\begin{align*}
\begin{cases}
\lambda(D^2u)\in \partial V  \quad in \quad \mathbb{R}^n\setminus \bar{D},\\
u=\varphi \quad on \quad \partial D,\\
\lim_{|x|\rightarrow\infty}|u(x)-\left(\frac{1}{2}x^\prime Ax+bx+c\right)|=0.
\end{cases}
\end{align*}
\end{lemm}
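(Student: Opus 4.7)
The strategy is to imitate the proof of Lemma \ref{lemm2}, with one key modification: since solutions of $\lambda(D^2u)\in\partial V$ need not be locally convex, I cannot invoke Alexandrov's almost-everywhere second-order differentiability to verify that a convex combination $\alpha u_3+(1-\alpha)u_1$ is a subsolution. Instead, I would build the initial subsolution as a pointwise maximum. After the usual affine reduction to $A=I$, $b=0$, set
\[
\underline{u}(x)=\max\{u_3(x),\,u_1(x)-(c_1-c)\}.
\]
The equation is invariant under adding constants to $u$, so $u_1-(c_1-c)$ is again a viscosity solution, and the pointwise maximum of two viscosity subsolutions is itself a viscosity subsolution. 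Because $c<c_1$, we have $\underline{u}=\max\{\varphi,\,\varphi-(c_1-c)\}=\varphi$ on $\partial D$; because $c>c_3$, the second term dominates at infinity, so $\underline{u}$ has asymptotic behavior $\frac{1}{2}|x|^2+c$.

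Next, let $\mathcal{S}$ denote the family of viscosity subsolutions $v\in USC(\overline{\mathbb{R}^n\setminus D})$ of $\lambda(D^2v)\in\partial V$ in $\mathbb{R}^n\setminus\bar D$ satisfying $v=\varphi$ on $\partial D$ and $\limsup_{|x|\to\infty}(v(x)-\frac{1}{2}|x|^2)\leq c$, and set $u(x)=\sup_{v\in\mathcal{S}}v(x)$. Then $\underline u\in\mathcal{S}$, so $u\geq\underline u$. For any $v\in\mathcal{S}$, both $u_3+(c-c_3)$ and $u_1$ are viscosity supersolutions that dominate $v$ on $\partial D$ (using $c>c_3$) and at infinity (using $c<c_1$), so the comparison principle (Lemma \ref{comparison lemma}) applied on $(\mathbb{R}^n\setminus\bar D)\cap B_R$ with $R\to\infty$ yields $v\leq u_3+(c-c_3)$ and $v\leq u_1$. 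Consequently
\[
\underline{u}\,\leq\, u\,\leq\, \min\{u_1,\,u_3+(c-c_3)\}.
\]
Squeezing on $\partial D$ gives $u=\varphi$ there, and squeezing at infinity (both $\underline u$ and $u_3+(c-c_3)$ having asymptotic $\frac{1}{2}|x|^2+c$) gives $\lim_{|x|\to\infty}(u(x)-\frac{1}{2}|x|^2)=c$.

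The remaining standard Perron argument shows that the upper semicontinuous envelope $u^*$ is a viscosity subsolution (automatic from the sup construction) and that the lower semicontinuous envelope $u_*$ is a viscosity supersolution. Lemma \ref{comparison lemma} then forces $u^*=u_*$, so $u$ is continuous and is a viscosity solution of $\lambda(D^2u)\in\partial V$. I expect the main obstacle to be the bump construction used to establish the supersolution property of $u_*$: if $u_*$ failed the supersolution test at an interior point, a small compactly supported upward perturbation of $u$ should produce a strictly larger admissible subsolution, contradicting the maximality. Carrying this out in the general setting relies on the ellipticity condition $\nu(\lambda)\in\Gamma_n$ to guarantee that admissible perturbations remain admissible, and on the fact that a compactly supported bump inside $\mathbb{R}^n\setminus\bar D$ leaves both the boundary condition and the asymptotic requirement defining $\mathcal{S}$ undisturbed.
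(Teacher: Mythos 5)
Your proof is correct and sits inside the same Perron framework as the paper's, but it replaces the paper's key construction of the initial admissible subsolution. The paper takes the convex combination $u_c=\alpha u_3+(1-\alpha)u_1$ with $c=\alpha c_3+(1-\alpha)c_1$ and invokes Lemma \ref{app} (proved in the appendix via $\epsilon$-upper envelopes, a.e.\ second order differentiability and the Alexandrov--Bakelman--Pucci estimate) to get $\lambda(D^2u_c)\in\bar{V}$; you instead take $\underline{u}=\max\{u_3,\,u_1-(c_1-c)\}$, which is a subsolution by the elementary fact that a pointwise maximum of viscosity subsolutions is a subsolution, and which has the correct boundary value (since $c<c_1$) and the correct asymptotics (since $c>c_3$ it coincides with $u_1-(c_1-c)$ near infinity). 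This is a genuine simplification: it bypasses the appendix lemma entirely (and would work just as well in Lemma \ref{lemm2}), while the rest of your argument --- comparison against $u_1$ and $u_3+(c-c_3)$ on $(\mathbb{R}^n\setminus\bar{D})\cap B_R$ with an $\epsilon$-adjustment on $|x|=R$, then $R\to\infty$, $\epsilon\to 0$, to squeeze the boundary value and the asymptotic behavior of the supremum --- is exactly the paper's. For the last step you re-derive Perron solvability by hand (USC envelope is a subsolution, bump construction for the supersolution property of the LSC envelope, then comparison), which you only sketch; the paper short-circuits this by citing the proof of Theorem 1.5 in \cite{LNW}, and your sketch is indeed the standard argument, which works here because $V$ is open and the structure conditions give degenerate ellipticity, so this is not a gap so much as a place where a citation would suffice. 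One slip to correct: in this general setting there is no ``affine reduction to $A=I$'' --- the relation $\lambda(D^2u)\in\partial V$ is invariant only under orthogonal changes of variables, so, as in the paper, you may normalize only to $b=0$ and $A$ diagonal (or simply carry the quadratic $\frac{1}{2}x^\prime Ax+bx$ through the argument unchanged, which is all your proof actually needs).
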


\begin{proof}
We note that by an orthogonal transformation and adding a linear function to $u$, we only need to consider case $b=0$ and $A=\Lambda$ is a diagonal matrix. 

Now we show that for any $ c_3< c< c_1$, there is a viscosity solution $u$ with asymptotic behavior $\frac{1}{2}x^\prime \Lambda x+c$.

To show this, we consider viscosity subsolutions $v$ to the exterior Dirichlet problem
\begin{align}\label{visgenerallambda}
\begin{cases}
\lambda(D^2v)\in  \bar{V} \quad in \quad \mathbb{R}^n\setminus \bar{D},\\
v= \varphi \quad on \quad \partial D,\\
\lim_{|x|\rightarrow\infty}|v(x)-\left(\frac{1}{2}x^\prime \Lambda x+c\right)|=0.
\end{cases}
\end{align}

First of all, we show that there exists at least one viscosity subsolution.

Let $u_3$ and $u_1$ be solutions with asymptotic behavior $\frac{1}{2}x^\prime \Lambda x+c_3$ and $\frac{1}{2}x^\prime \Lambda x+c_1$ respectively.

Note that $c=\alpha c_3+(1-\alpha) c_1$ for some $0<\alpha<1$. Let $u_c:=\alpha u_3+(1-\alpha) u_1$, by Lemma \ref{app}
\begin{align*}
\lambda(D^2u_c)\in \bar{V}.
\end{align*}

On the other hand,
\begin{align*}
u_c=\varphi \quad on\quad \partial D,
\end{align*}
and
\begin{align*}
\lim_{x\rightarrow \infty} (u_c- \frac{1}{2}x^\prime \Lambda x)= c.
\end{align*}

Thus $u_c$ is a viscosity subsolution of (\ref{visgenerallambda}).

Now for every viscosity subsolution $v$ of (\ref{visgenerallambda}), we have
\begin{align*}
v= u_3=u_1 \quad on \quad \partial D,
\end{align*}
\begin{align*}
\lim_{|x|\rightarrow \infty}(v(x)-u_1(x))=c-c_1,
\end{align*}
\begin{align*}
\lim_{|x|\rightarrow \infty}(v(x)-u_3(x))=c-c_3.
\end{align*}

We deduce from Lemma \ref{comparison lemma}, applied to $v$ and $u_1$, and then to $v$ and $u_3+c-c_3$, as before that
\begin{align*}
v&\leq  u_1\quad in \quad \mathbb{R}^n\setminus D,\\
v-u_3&\leq  c-c_3\quad in \quad \mathbb{R}^n\setminus D,
\end{align*}
for all such subsolution $v$.

Let 
\begin{align*}
u(x)=\sup\{v(x) | v \text{ is a viscosity subsolution of }  (\ref{visgenerallambda})\}.
\end{align*}

Since $u_c$ is a viscosity subsolution of (\ref{visgenerallambda}), we have 
\begin{align*}
u\geq u_c\quad in\quad \mathbb{R}^n\setminus D.
\end{align*}

By the above, every viscosity subsolution of (\ref{visgenerallambda}) satisfies $v\leq u_1$, we have
\begin{align*}
u\leq u_1\quad in\quad  \mathbb{R}^n\setminus D.
\end{align*}

Since $u_c=u_1=\varphi$ on $\partial D$ and $u_c,u_1\in C^0(\mathbb{R}^n\setminus D)$, we have $u=\varphi$ on $\partial D$.

By the proof of Theorem 1.5 in \cite{LNW},  $u$ is a viscosity solution of 
\begin{align*}
\begin{cases}
\lambda(D^2 u)\in \partial V \quad in \quad \mathbb{R}^n\setminus \bar{D},\\
u= \varphi \quad on \quad \partial D.
\end{cases}
\end{align*}

Now we only need to consider the asymptotic behavior. Since for all such $v$, we have
\begin{align*}
\lim_{|x|\rightarrow \infty}(v-\frac{1}{2}x^\prime \Lambda x)=c.
\end{align*}
It follows that
\begin{align*}
\liminf_{|x|\rightarrow \infty}(u-\frac{1}{2}x^\prime \Lambda x)\geq  c.
\end{align*}

On the other hand, as $v-u_3\leq  c-c_3$ for all such $v$, we have $u-u_3\leq c-c_3$ on $\mathbb{R}^n\setminus D$ and therefore
\begin{align*}
\limsup_{|x|\rightarrow \infty}(u-\frac{1}{2}x^\prime \Lambda x)\leq \lim_{|x|\rightarrow \infty}(u_3+c-c_3-\frac{1}{2}x^\prime \Lambda x)=c
\end{align*}

Thus $u$ has asymptotic behavior $\frac{1}{2}x^\prime \Lambda x+c$. 

\end{proof}

\medskip

We now consider the limiting case.

\begin{lemm}
Let $D$ be a smooth, bounded, strictly convex domain in $\mathbb{R}^n$ for $n\geq 3$, let $\varphi\in C^2(\partial D)$, let $A$ be a real positive definite $n\times n$ symmetric matrix such that $\lambda(A)\in \partial V$, $b\in \mathbb{R}^n$. Suppose there exists a sequence $c_j\rightarrow c$ such that for each $c_j$, there exists a viscosity solution $u_j$ satisfying
\begin{align*}
\begin{cases}
\lambda(D^2u)\in \partial V \quad in \quad \mathbb{R}^n\setminus \bar{D},\\
u=\varphi \quad on \quad \partial D,\\
\lim_{|x|\rightarrow\infty}|u_j(x)-\left(\frac{1}{2}x^\prime Ax+bx+c_j\right)|=0.
\end{cases}
\end{align*}

Then there exists a viscosity solution $u$ satisfying
\begin{align}\label{3.3}
\begin{cases}
\lambda(D^2u)\in \partial V  \quad in \quad \mathbb{R}^n\setminus \bar{D},\\
u=\varphi \quad on \quad \partial D,\\
\lim_{|x|\rightarrow\infty}|u(x)-\left(\frac{1}{2}x^\prime Ax+bx+c\right)|=0.
\end{cases}
\end{align}

\end{lemm}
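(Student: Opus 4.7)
The plan is to exploit the translation invariance of the equation $\lambda(D^2u)\in \partial V$ together with the comparison principle (Lemma \ref{comparison lemma}) to show that $\{u_j\}$ is Cauchy in $L^\infty(\mathbb{R}^n\setminus D)$, so that the uniform limit $u$ will inherit the PDE (by standard viscosity stability), the boundary condition, and the prescribed asymptotic behavior (by a three-$\epsilon$ argument).

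After passing to a subsequence, I may assume $c_j\to c$ monotonically. For any pair with $c_i\ge c_j$ I would establish the two-sided bound
\begin{align*}
0\le u_i-u_j\le c_i-c_j \quad \text{in } \mathbb{R}^n\setminus D.
\end{align*}
For the upper estimate, observe that $u_j+(c_i-c_j)$ is again a viscosity solution of $\lambda(D^2v)\in\partial V$ and has the same asymptotic profile $\frac{1}{2}x'Ax+bx+c_i$ as $u_i$. On $\partial D$ one has $u_i=\varphi\le\varphi+(c_i-c_j)=u_j+(c_i-c_j)$, and for any $\epsilon>0$ there exists $R$ so large that $u_i\le u_j+(c_i-c_j)+\epsilon$ on $\{|x|=R\}$. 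Applying Lemma \ref{comparison lemma} on the bounded annular region $B_R\setminus\bar D$ and sending $\epsilon\to 0$ yields $u_i\le u_j+(c_i-c_j)$. The lower estimate is analogous: $u_j=u_i=\varphi$ on $\partial D$, and $u_j-u_i\to c_j-c_i\le 0$ at infinity, so the same annular comparison gives $u_j\le u_i$.

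The two-sided bound implies $\|u_i-u_j\|_{L^\infty(\mathbb{R}^n\setminus D)}\le|c_i-c_j|$, so $\{u_j\}$ is Cauchy in $L^\infty$. Let $u$ be its uniform limit; then $u\in C^0(\overline{\mathbb{R}^n\setminus D})$ and $u=\varphi$ on $\partial D$. By standard stability of viscosity solutions under uniform convergence, applied in exactly the form used at the end of the proof of Lemma \ref{lemm2} and in \cite{LNW}, $u$ is a viscosity solution of $\lambda(D^2u)\in\partial V$ in $\mathbb{R}^n\setminus \bar D$. The asymptotic behavior passes to the limit via
\begin{align*}
\left|u(x)-\tfrac{1}{2}x'Ax-bx-c\right| \le \|u-u_j\|_{L^\infty} + \left|u_j(x)-\tfrac{1}{2}x'Ax-bx-c_j\right| + |c_j-c|;
\end{align*}
given $\epsilon>0$, first pick $j$ so large that the first and third terms are less than $\epsilon/3$, and then, with this $j$ fixed, pick $R$ so large that the middle term is less than $\epsilon/3$ for $|x|\ge R$.

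The only delicate point is the use of the comparison principle on the unbounded exterior domain. This is reduced in a standard way to Lemma \ref{comparison lemma} on the bounded annuli $B_R\setminus \bar D$ by $\epsilon$-approximating the common asymptotic data on $\{|x|=R\}$, exactly as in the proof of Lemma \ref{lemm2}, and requires the translation invariance $\lambda(D^2(u+\mathrm{const}))=\lambda(D^2u)$ to guarantee that $u_j+(c_i-c_j)$ is still a viscosity solution. Apart from this essentially routine bookkeeping, the argument is a direct adaptation of the limiting step at the end of the proof of Theorem \ref{Main thm}.
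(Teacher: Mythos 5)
Your proposal is correct and follows essentially the same route as the paper: the comparison principle applied to $u_i$, $u_j$ and their constant shifts gives $0\le u_i-u_j\le c_i-c_j$ (the paper writes this as $u_i-c_i+c_j\le u_j\le u_i$), hence uniform convergence, and the limit inherits the equation, boundary data, and asymptotics exactly as you describe. Your explicit reduction of the exterior comparison to bounded annuli $B_R\setminus\bar D$ with an $\epsilon$-margin is just a fleshed-out version of the step the paper leaves implicit, so there is no substantive difference.
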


\begin{proof}
We first note that by passing to a subsequence, we may assume that $c_j$ is monotonically increasing or decreasing. We only need to prove that $c_j$ is monotonically decreasing. The other case follows in the same way.

Applying Lemma \ref{comparison lemma}, we have $u_i-c_i+c_j\leq u_j \leq u_i$ in $\mathbb{R}^n \setminus D$ for $i\leq j$. Thus $\{u_j\}$ is monotonically nonincreasing and $\|u_i-u_j\|_{L^\infty (\mathbb{R}^n \setminus D)}\rightarrow 0$ as $i,j\rightarrow \infty$. Consequently, for some $u_\infty\in C^0(\mathbb{R}^n\setminus D)$, 
\begin{align*}
\lim_{j\rightarrow\infty}\|u_j-u_\infty\|_{L^\infty(\mathbb{R}^n\setminus D)}=0.
\end{align*} 
It follows that
\begin{align*}
\lim_{|x|\rightarrow \infty}|u_\infty(x)-\left(\frac{1}{2}x^\prime Ax+bx+c\right)|=0.
\end{align*}
Since $\{u_j\}$ satisfies the first two lines in (\ref{3.3}), and $u_j\rightarrow u_\infty$ uniformly, it is standard that $u_\infty$ also satisfies the first two lines of (\ref{3.3}).

\end{proof}

\section{Appendix}
In this appendix, we prove the following lemma.

\begin{lemm}\label{app}
Let $u,v$ be two viscosity solutions of (\ref{visgeneral}), then for any $0<\alpha<1$, $\alpha u+(1-\alpha)v$ is a viscosity subsolution of (\ref{visgeneral}).
\end{lemm}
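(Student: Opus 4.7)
My plan is built on the observation that the set $K := \{M \in \sym(n) : \lambda(M) \in \bar{V}\}$ of admissible Hessians is closed, convex, and monotone, in the sense that $M \in K$ and $N \geq M$ imply $N \in K$. Convexity follows from $V$ being convex and symmetric under coordinate permutations; monotonicity is a consequence of $\nu \in \Gamma_n$ on $\partial V$. The goal is then to show that whenever $\phi \in C^2$ touches $w := \alpha u + (1-\alpha) v$ from above at $x_0$, one has $D^2 \phi(x_0) \in K$.

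Because $u$ and $v$ are only assumed continuous and need not be locally convex, pointwise Hessians are not available, so I would first regularize by sup-convolutions
\begin{equation*}
u^\epsilon(x) := \sup_y \bigl\{ u(y) - \tfrac{|x-y|^2}{2\epsilon} \bigr\}, \qquad v^\epsilon(x) := \sup_y \bigl\{ v(y) - \tfrac{|x-y|^2}{2\epsilon} \bigr\}.
\end{equation*}
These are semiconvex with $D^2 u^\epsilon, D^2 v^\epsilon \geq -\tfrac{1}{\epsilon} I$ a.e., converge locally uniformly to $u,v$, and because the equation is translation-invariant they remain viscosity subsolutions of $\lambda(D^2 \cdot) \in \bar{V}$ on slightly smaller domains. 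By the standard stability of viscosity subsolutions under locally uniform limits, it suffices to prove that $w^\epsilon := \alpha u^\epsilon + (1-\alpha) v^\epsilon$ is a viscosity subsolution, and then send $\epsilon \to 0$.

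By Alexandrov's theorem, $u^\epsilon$ and $v^\epsilon$ are twice differentiable a.e. At any twice-differentiable point $x$ of $u^\epsilon$, the quadratic $y \mapsto u^\epsilon(x) + \nabla u^\epsilon(x) \cdot (y-x) + \tfrac{1}{2} (y-x)^T (D^2 u^\epsilon(x) + \eta I)(y-x)$ touches $u^\epsilon$ from above near $x$ for every $\eta > 0$, so the subsolution property yields $\lambda(D^2 u^\epsilon(x) + \eta I) \in \bar{V}$; letting $\eta \to 0$ and invoking the closedness of $K$ gives $\lambda(D^2 u^\epsilon(x)) \in \bar{V}$ a.e. The same holds for $v^\epsilon$, and the convexity of $K$ then forces $\lambda(D^2 w^\epsilon) \in \bar{V}$ a.e.

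To upgrade this a.e. statement to a viscosity subsolution statement I would reuse the ABP argument from Lemma \ref{lemm2} verbatim. Given $\phi \in C^2$ touching $w^\epsilon$ from above at $x_0 = 0$, set $\phi_\delta := \phi + \delta |x|^2$ and $\xi := \phi_\delta - w^\epsilon - \delta^4$; then $\xi(0) = -\delta^4 < 0$ while $\xi > 0$ on $\partial B_\delta$, and semiconvexity of $w^\epsilon$ gives $D^2 \xi \leq C I$ a.e. By the Alexandrov--Bakelman--Pucci inequality, the contact set $\{\xi = \Gamma_\xi\}$ with the convex envelope has positive Lebesgue measure, hence meets the full-measure set of twice-differentiable points of $w^\epsilon$; at any such intersection point $x$ one has $D^2 \phi_\delta(x) \geq D^2 w^\epsilon(x)$, so monotonicity of $K$ together with $\lambda(D^2 w^\epsilon(x)) \in \bar{V}$ gives $\lambda(D^2 \phi_\delta(x)) \in \bar{V}$. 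Letting $\delta \to 0$ and using continuity of $D^2 \phi$ and closedness of $K$ completes the proof. I expect the main technical point to be verifying carefully that sup-convolutions preserve the subsolution property in this set-valued formulation of the equation (the standard argument transfers a test function touching $u^\epsilon$ at $x$ to a test function touching $u$ at the point $y$ realizing the sup), together with the ABP step, whose applicability rests squarely on the semiconvexity bound on $w^\epsilon$ that controls $D^2 \xi$ from above.
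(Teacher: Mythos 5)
Your proposal is correct and follows essentially the same route as the paper's own proof: sup-convolution (the $\epsilon$-upper envelope), Alexandrov a.e.\ twice differentiability plus the quadratic-touching argument to get $\lambda(D^2u^\epsilon),\lambda(D^2v^\epsilon)\in\bar V$ a.e., convexity of $\bar V$ for the combination, the ABP contact-set argument from Lemma \ref{lemm2} to upgrade the a.e.\ inclusion to the viscosity subsolution property, and stability under locally uniform convergence as $\epsilon\to 0$. No substantive differences to report.
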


\begin{proof}
To begin with, let us recall the definition of $\epsilon$-upper envelope of $u$, see e.g. \cite{LNW},
\begin{align*}
u^\epsilon(x):=\max_{y\in \bar{\Omega}}\{u(y)-\frac{1}{\epsilon}|y-x|^2\},\quad x\in \bar{\Omega}.
\end{align*}
Then
\begin{align*}
u^\epsilon\rightarrow u, \quad \epsilon\rightarrow 0^+,
\end{align*}
in $C^0_{loc}(\Omega)$.

And $u^\epsilon$ is a viscosity subsolution of (\ref{visgeneral}). 

Moreover $u^\epsilon$ is second order differentiable almost everythere and
\begin{align*}
D^2u^\epsilon\geq -\frac{2}{\epsilon}I,\quad a.e. \quad in\quad \Omega.
\end{align*}

Let $\bar{x}\in \Omega$ be a point where $u^\epsilon$ is second order differentiable, say $\bar{x}=0$. For $\delta>0$ small, define 
\begin{align*}
\phi(x)=u^\epsilon(0)+\nabla u^\epsilon(0)x+\frac{1}{2}x^\prime (D^2 u^\epsilon(0)+\delta) x.
\end{align*}
Then $\phi(x)\geq u^\epsilon(x)$ near $0$ and $\phi(0)=u^\epsilon(0)$. 

Since $u^\epsilon$ is a viscosity subsolution, we have $\lambda(D^2\varphi)(0)\in \bar{V}$. Sending $\delta$ to $ 0$, we have $\lambda(D^2u^\epsilon)(0)\in \bar{V}$. Thus $\lambda(D^2u^\epsilon)\in \bar{V}$ a.e.

Similarly, $\lambda(D^2v^\epsilon)\in \bar{V}$ a.e., where $v^\epsilon$ is the $\epsilon$-upper envelope of $v$.

Define
\begin{align*}
w^\epsilon_\alpha=\alpha u^\epsilon+(1-\alpha)v^\epsilon.
\end{align*}

Since $\bar{V}$ is convex, it follows that $\lambda(D^2 w^\epsilon_\alpha)\in \bar{V}$ a.e.

We now prove that $w^\epsilon_\alpha$ is a viscosity subsolution in $\Omega$.

Let $\bar{x}\in \Omega$ be an arbitrary point, say $\bar{x}=0$. For any $\phi\in C^2$, such that $\phi(0)=w^\epsilon_\alpha(0)$ and $\phi(x)\geq w^\epsilon_\alpha(x)$ for $x$ near $0$.

For $\delta>0$ small, define $\phi_\delta(x)=\phi(x)+\delta |x|^2$, then 
\begin{align*}
\phi_\delta(x)\geq w^\epsilon_\alpha(x)+\delta^3,\quad for\quad |x|=\delta.
\end{align*}

Consider
\begin{align*}
\xi(x)=\phi_\delta(x)-w^\epsilon_\alpha(x)-\delta^4.
\end{align*}

Then $\xi(0)=-\delta^4$ and $\xi(x)> 0$ for $|x|=\delta$. Since $D^2u^\epsilon\geq -\frac{2}{\epsilon}I$, $D^2v^\epsilon\geq -\frac{2}{\epsilon}I$  almost everythere, we have $D^2\xi\leq C(\epsilon)I$ almost everywhere. It follows from the Alexandrov-Bakelman-Pucci inequality that
\begin{align*}
\delta^4\leq C\left(\int_{\{\xi=\Gamma_\xi\}}\det(D^2\Gamma_\xi)\right)^{\frac{1}{n}},
\end{align*}
where $\Gamma_\xi$ is the convex envelope of $\xi$.

As in the previous section, there exists some $x\in \{\xi=\Gamma_\xi\}\cap B_\delta(0)$, where $w^\epsilon_\alpha(x)$ is second order differentiable, $\lambda(D^2w^\epsilon_\alpha)(x)\in \bar{V}$, and $D^2\xi(x)\geq 0$, i.e.
\begin{align*}
D^2\phi_\delta(x)\geq D^2w^\epsilon_\alpha(x).
\end{align*}

It follows that $\lambda( D^2\phi_\delta )(x)\in \bar{V}$. Sending $\delta$ to $0$, we have $\lambda( D^2\varphi )(0)\in \bar{V}$. Thus $w^\epsilon_\alpha$ is a viscosity subsolution. Since $w^\epsilon_\alpha\rightarrow\alpha u+(1-\alpha)v$ in $C^0_{loc}(\Omega)$, sending $\epsilon$ to $0$, it follows that $\alpha u+(1-\alpha)v$ is a viscosity subsolution.

\end{proof}

\end{document}